\newcommand\keywords[1]{\textbf{Keywords}: #1}
\title{Learning based numerical methods for Helmholtz equation with high frequency}
\author[1]{Yu Chen}
\author[2]{Jin Cheng \thanks{Corresponding author: jcheng@fudan.edu.cn}}
\author[2]{Tingyue Li}
\author[3]{Yun Miao}
\affil[1]{School of Mathematics, Shanghai University of Finance and Economics, Shanghai 200433, P. R. China}
\affil[2]{School of Mathematical Sciences, Fudan University, Shanghai 200433, P. R. China}
\affil[3]{Theory lab, Central Research institute 2012 Labs, Huawei Technologies Co ltd. , Shanghai 200433, P. R. China}
\date{}
\begin{document}
\pagenumbering{arabic}

\maketitle
%{\normalsize xxx }
\begin{abstract}
	High-frequency issues have been remarkably challenges in numerical methods for partial differential equations. In this paper, a learning based numerical method (LbNM) is proposed for Helmholtz equation with high frequency. The main novelty is using Tikhonov regularization method to stably learn the solution operator by utilizing relevant information especially the fundamental solutions. Then applying the solution operator to a new boundary input could quickly update the solution. Based on the method of fundamental solutions and the quantitative Runge approximation, we give the error estimate. This indicates interpretability and generalizability of the present method. Numerical results validates the error analysis and demonstrates the high-precision and high-efficiency features. 
\end{abstract}

\keywords{Learning based method, Helmholtz equation, High frequency,  Fundamental solutions, Tikhonov regularization.}

\section{Introduction}
Numerical methods for partial differential equations are powerful tools for solving the engineering problems. The related research has developed rapidly in the last century. Some famous commercial software such as Matlab, Comsol, etc, have been widely used by the engineers and researchers. Even though, there are still some difficult problems, which may not be solved successfully, for example, the high frequency problems, high dimensional problems etc. Artificial intelligence is a rapidly developed technology and will give more influence on the daily life and traditional researches. It would be interesting to combine the AI technology to the numerical algorithms and simulation of the practical problems. The goal of this paper is to apply the learning theory in the numerical solutions of partial differential equations. We will present a learning based numerical method for partial differential equations. It is shown that, compared with other methods,  our method has the strong interpretability and generalizability. Some error analysis can be proved and confirmed with the numerical simulations.

Helmholtz equations are fundamental equations in the scattering theory and have been studied for a long time. 
There are many researches on the increasing stability for Helmholtz equation \cite{Isakov2004,Nagayasu2013,Cheng2016,Isakov2020}. To the best of the authors' knowledge, it is hard to give numerical examples to verify the theoretical estimates of increasing stability owning to the difficulty of computing the Helmholtz equation with high frequencies. However, our learning based numerical method is effective in the performance of high-frequency problems.

The method of fundamental solutions (MFS) is useful to finish our error analysis in this paper. It is well-known that it has been widely used in solving numerically boundary value problems for linear partial differential equations since Mathon and Johnston formulated the method numerically in \cite{Mathon1977} for the first time. The authors give an overview of the MFS as a heuristic numerical method in \cite{Alexander2020}. In short, it is a mesh-free method that the solution is approximated by a linear combination of the fundamental solutions of the partial differential equations with respect to some source points placed outside the domain.
However, the main drawback of the MFS is that the matrices involved are typically
ill-conditioned. The authors proposed techniques to alleviate or remove the ill-conditioning of the MFS, but none of these methods seems to completely solve the problem in 
\cite{Antunes2018_1,Antunes2018_2,Antunes2022}. Barnett and Betcke \cite{Barnett2008} investigated the MFS for the interior Helmholtz problem on analytical domains to obtain a numerically stable method.
Besides, facing new problems, we need to  compute the solution repeatedly by solving the linear system of equations.

One property of elliptic equations that plays key role in our error analysis is Runge's approximation properties for elliptic equations. The classical Runge approximation is studied by Lax \cite{Lax1956} and Malgrage \cite{Malgrange1956}.
It states that the solution of the partial differential equation in the smaller domain can be approximated by the solution of the same equation in the larger domain.
The quantitative estimate of Runge approximation is first considered in \cite{Salo2019} and further gives the bound of the approximated solution on the larger domain. It has become an increasingly significant tool in inverse problems \cite{Ruland2021,Pohjola2022,Kravchenko2021}. 
The numerical realization of Runge's approximation was considered in \cite{Cheng2005}. 

In this paper, we propose a learning based numerical method for Helmholtz equation with high frequency. The methods are high-precision and high-efficiency as they utilize various data solution, especially the fundamental solutions. Having learned the solution operator, we can update the solution of new problems with different boundary conditions. In addition, it is practical to handle the case of computing local areas in engineering problems. 

The main idea of the theoretical analysis is to make use of the theoretical results of the MFS and Runge approximation. Then, we divide into three cases based on the region that the solution can be continued.
In Case 1, we employ Tikhonov regularization and the theoretical results of MFS to derive the Lipschitz-type error estimate when the solution can be continued to a sufficiently large domain. Furthermore, using the quantitative version of Runge approximation in \cite{Salo2019}, we can give the H\"{o}lder-type estimate of error precisely in case 2 of Section \ref{sec:case2} although the solution can only be continued to a slightly large domain. In case 3 of Section \ref{sec:case3}, we can obtain a logarithm-type error estimate 
even the solution can not be continued at all. 

This paper is organized as follows. The learning based numerical methods will be given in Section \ref{sec:algorithm}. The corresponding lemmas and propositions of MFS and Runge approximation will be present in Section \ref{sec:preliminaries}.
The main theoretical analysis will be proved in section
\ref{sec:theoretical results}. In section \ref{sec:numerical examples}, numerical evidence shows that the learning based numerical method is a fast and high-precision approach, in particular in high-frequency regimes. Some concluding remarks and the future focus are given in section \ref{sec:concluding}.

%\section{Formulation of the problems}
\section{The learning based algorithm}\label{sec:algorithm}
\subsection{Motivation}
Suppose that $u(x)$ satisfies Helmholtz equation with Dirichlet boundary condition:
\begin{numcases}{}
	\Delta u(x)+k^2 u(x)=0, & \text{$x \in \Omega$}, \label{eq:Helm}\\
	u(x)=f(x), & \text{$x \in \partial\Omega $},\label{eq:Helm_boundary}
\end{numcases} 
where $k>0$ is the wave number. For a fixed point $x\in\Omega$, consider Green's function satisfying
\begin{equation} \label{eq:green}
	\begin{cases}
		\Delta G(x, y)+k^2 G(x, y)=\delta(y-x), &\text{$y \in \Omega$}, \\
		G(x, y)=0, &\text{$y \in \partial\Omega $}\,.
	\end{cases}
\end{equation}
According to Green's formula,
%$$
%\int_{\partial \Omega} \frac{\partial u(y)}{\partial \mathbf{n}} G(x, y)-\frac{\partial G(x, y)}{\partial \mathbf{n}} u(y) \mathrm{d} S_y-\int_{\Omega} \Delta u(y) G(x, y)+\left(k^2 G(x, y)-\delta(y-x)\right) u(y) \mathrm{d} y=0,
%$$
the solution can be formulated as 
$$
u(x)=\int_{\partial \Omega} \frac{\partial G(x, y)}{\partial n} f(y) \mathrm{d} S_y,\quad x\in \Omega,
$$
where $n$ is the outward normal vector.
Define the following bounded linear operator 
$$
\begin{aligned}
	A: H^{\frac{1}{2}}(\partial \Omega) &\rightarrow H^{1}(\Omega)\\
	Af& = \int_{\partial \Omega} \frac{\partial G(x, y)}{\partial n} f(y) \mathrm{d} S_y.
\end{aligned}
$$
If the boundary condition is discretized by 
$$
f\approx\sum\limits_{i=1}^{N} f_i e_i(x)
$$
with $\left\{e_i(x)\right\}_{i=1}^{N}$ being interpolation basis functions on the boundary, then the solution has the form
$$
u(x) \approx \sum_{i=1}^N f_i \int_{\partial \Omega} \frac{\partial G}{\partial n}(x, y) e_i(y) \mathrm{d} y=: \sum_i A_i(x) f_i,
$$
where $A_{i}(x)=(Ae_{i})(x)$ are components of the discrete solution operator.%$A^{(N)}_x$. 
%Especially, if $f$ is approximated by piece-wise constant function, i.e., $f$ is constant on $\Gamma_i, i=$ $1,2, \cdots N$ where $\partial \Omega=\bigcup_{i=1}^N \Gamma_i, \Gamma_i \bigcap_{i \neq j} \Gamma_j=\varnothing$, then $A_i(x)$ satisfies
%$$
%\begin{array}{ll}
%	\Delta A_i(x)+k^2 A_{i}(x)=0,  & x \in \Omega, \\
%	A_i(x)=\chi\left(\Gamma_i\right),&  x \in \partial \Omega .
%\end{array}
%$$
\subsection{Algorithm}
In this work, we consider a novel method based on reconstruction of the solution operator from sample solutions. Assuming there are plenty of solutions with the corresponding boundary values, we can reconstruct the solution operator by these solution data. We will show that with suitably chosen solutions, it
is possible to obtain reliable and accurate solution operator. 
Taking the boundary value problem \eqref{eq:Helm}-\eqref{eq:Helm_boundary} as example, let $u_i(x),\ (i=1,2, \cdots, M)$ be a set of solutions to Helmholtz equation with different boundary values (training solutions), and denote $T=\{x_j\}_{j=1}^N \subset\partial\Omega$ be the set of collocation points (interpolation nodes) on the boundary such that $f(x)\approx \sum_{i=1}^N f(x_i)e_i(x)$ on $\partial \Omega$. Then generally for each sample solution $u_i(x)$, $x\in\Omega$, there exists the following approximation, %Generally, since each sample solution sat the following approximation is valid, 
$$
u_i(x) \approx \sum_{j=1}^N A_j\left(x\right) u_i\left(x_j\right),\quad x_j\in T
$$ 
and there forms the following linear system,
$$
\left(u_1(x), \cdots, u_M(x)\right)=\left(A_1(x), \cdots, A_N(x)\right)\left(\begin{array}{ccc}
	u_1\left(x_1\right) & \cdots & u_M\left(x_1\right) \\
	\vdots & \ddots & \vdots \\
	u_1\left(x_N\right) & \cdots & u_M\left(x_N\right)
\end{array}\right)\,,
$$
denoted by, 
$$
\bm{b}_{1 \times M}=\bm{a}_{1 \times N} V_{N \times M}.
$$
When $M>N$, it is an over-determined system. %To solve it we can take the pseudo inverse of $V$ and obtain
%$$
%\bm{a}_{*}=\bm{b} V^{\dagger}
%$$
%with proper singular value tolerance, or solving the Tikhonov minimization problem
We obtain the solution solving the Tikhonov minimization problem,
\begin{align}\label{eq:Tikhonov}
	\bm{a}_{*}=\underset{\bm{a} \in \mathbb{R}^N}{\arg \min } \left\{\|\bm{a}  V-\bm{b}\|^{2}+\alpha \|\bm{a}\|^2\right\}
\end{align}
in which $\alpha\|\bm{a}\|^2$ is the penalty term with an {\it a priori} given regularization parameter $\alpha>0$. Referring to the conclusions of Tikhonov regularization in \cite{Tikhonov1977}, \eqref{eq:Tikhonov} exists a unique minimum $\bm{a}_{*}$ for all $\alpha>0$, this minimum is given by 
\begin{align}\label{eq:minimum}
	\bm{a}_{*} = \bm{b} V^*(VV^*+\alpha I)^{-1}=:\bm{b}\bm{W},
\end{align}
which depends continuously on $\bm{b}^{(x)} $. Then, the solution under boundary input $f$ can be calculated by,
$$
u(x)\approx \bm{a}_*\cdot \bm{f}= \bm{b} V^*(VV^*+\alpha I)^{-1}\bm{f}\,,
$$
in which $\bm{f}=(f(x_{1}),f(x_{2}),\cdots,f(x_{N}))^{T}$.

Particularly, the training solutions $u_i(x)$ can be taken as fundamental solutions to Helmholtz equation
$$
\Phi\left(\hat{x}_i, \cdot\right)= 
\begin{cases}
	\displaystyle\frac{\mathrm{i}}{4} H_0^{(1)}(k|\hat{x}_i-\cdot|), & n=2\,,\\
	\displaystyle\frac{1}{4 \pi} \frac{\exp (\mathrm{i} k|\hat{x}_i-\cdot|)}{|\hat{x}_i-\cdot|}, & n=3\,,
	
\end{cases}
$$
where $\left\{\hat{x}_i\right\}_{i=1}^M$ are source points equally distributed on $\partial O_{R}\left(\Omega \subset\subset O_{R}\right)$ (see Fig. \ref{fig:domains} for illustration), and  $H_0^{(1)}$ denotes the Hankel function of the first kind of order 0.
\begin{remark}
	%Actually, the special Green function in \eqref{eq:green} can be expressed explicitly as follows:
	For each fixed $x\in\Omega$, let $G_0(x,y)$ be the solution to 
	\begin{equation*} 
		\begin{cases}
			\Delta G_0(x, y)+k^2 G_0(x, y)=0, &\text{$y \in \Omega$}, \\
			G_0(x, y)=\Phi(x,y), &\text{$y \in \partial\Omega $}\,,
		\end{cases}
	\end{equation*}
	then Green's function defined in \eqref{eq:green} reads,  
	\begin{align*}
		G(x,y)=G_{0}(x,y)-\Phi(x,y).
	\end{align*}
	%	where $G_{0}(x,y)$ satisfies
	
\end{remark}
\begin{remark}
	The algorithm is also applicable to other types of boundary conditions by adjusting the data matrix correspondingly, which is studied in our forthcoming paper.

\end{remark}

\section{Preliminaries}\label{sec:preliminaries}

In this section, we introduce some preliminaries, including the global estimate and the interior regularity for Helmholtz equation, the method of fundamental solutions, and Runge approximation. These knowledge will be utilized in the following theoretical analysis.

The following lemmas provide the existence, the global estimate and the interior regularity for solutions of
the homogeneous Helmholtz equation, see \cite[Proposition 2]{Beretta2016}, \cite[Lemma 1.4]{Ruland2021} and \cite[Section 6.3]{Evans2016} for details. 

\begin{lemma}(\cite[Proposition 2]{Beretta2016} and \cite[Lemma 1.4]{Ruland2021})\label{lem:existence}
	Let $\Omega$ be a bounded Lipschitz domain in $\mathbb{R}^n, f \in H^{1 / 2}(\partial \Omega)$. Then, there exists a discrete set  $E \subset \mathbb{R}$ such that, for every $k^2 \in\mathbb{R} \backslash E$, there exists a unique solution $u \in H^1(\Omega)$ of \eqref{eq:Helm}-\eqref{eq:Helm_boundary}.
	Furthermore, there exists a positive constant $C$ such that
	$$
	\|u\|_{H^1(\Omega)} \leqslant C\left(1+\frac{k^3}{\operatorname{dist}(k^2,E)}\right)\|f\|_{H^{1 / 2}(\partial \Omega)},
	$$
	where $C$ depends only on $\Omega$.
	More precisely, $E$ denotes the set of the Dirichlet eigenvalues of the operator $(-\Delta)$ on $\Omega$.
\end{lemma}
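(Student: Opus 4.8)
The result is classical, and the plan is to split the argument into a reduction to homogeneous boundary data, a soft existence/uniqueness step, and a quantitative energy estimate. First I would remove the boundary data by letting $F\in H^1(\Omega)$ be the \emph{harmonic} extension of $f$, so that $\Delta F=0$ in $\Omega$, $F=f$ on $\partial\Omega$, and $\|F\|_{H^1(\Omega)}\le C\|f\|_{H^{1/2}(\partial\Omega)}$. Writing $u=v+F$, solving \eqref{eq:Helm}--\eqref{eq:Helm_boundary} is then equivalent to finding $v\in H^1_0(\Omega)$ with $(-\Delta-k^2)v=k^2F$ in $H^{-1}(\Omega)$. The reason to use the harmonic extension, rather than an arbitrary bounded lift, is that it keeps the forcing term of size $k^2$ and introduces no second-order term; this is exactly what will produce the exponent $3$ below, whereas a generic lift puts a second-order term in the source and leads to a larger power of $k$.

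For existence and uniqueness I would invoke the Fredholm alternative: since $(-\Delta)^{-1}$ maps $L^2(\Omega)$ compactly into itself, $-\Delta-k^2\colon H^1_0(\Omega)\to H^{-1}(\Omega)$ is a compact perturbation of the isomorphism $-\Delta$, hence Fredholm of index zero. Its kernel consists precisely of the Dirichlet eigenfunctions of $-\Delta$ at the value $k^2$, so it is invertible --- and therefore $v$, hence $u$, exists and is unique --- exactly when $k^2\notin E$, where $E$ is the Dirichlet spectrum of $-\Delta$ on $\Omega$, which is discrete by the spectral theorem for the Dirichlet Laplacian on a bounded Lipschitz domain.

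For the quantitative bound I would expand in an $L^2(\Omega)$-orthonormal basis $\{\phi_j\}$ of Dirichlet eigenfunctions, $-\Delta\phi_j=\lambda_j\phi_j$, and write $F=\sum_jF_j\phi_j$; then $v=\sum_j\frac{-k^2F_j}{\lambda_j-k^2}\phi_j$, and using $\|\phi_j\|_{H^1_0(\Omega)}^2=\lambda_j$ together with the split into $\lambda_j\le 2k^2$ versus $\lambda_j>2k^2$ one obtains
\begin{equation*}
\|v\|_{H^1_0(\Omega)}^2=k^4\sum_j\frac{\lambda_jF_j^2}{(\lambda_j-k^2)^2}\le\Bigl(2k^2+\frac{2k^6}{\operatorname{dist}(k^2,E)^2}\Bigr)\|F\|_{L^2(\Omega)}^2 .
\end{equation*}
Combining with $\|F\|_{H^1(\Omega)}\le C\|f\|_{H^{1/2}(\partial\Omega)}$ and Poincar\'e's inequality gives $\|u\|_{H^1(\Omega)}\le C\bigl(1+k+k^3/\operatorname{dist}(k^2,E)\bigr)\|f\|_{H^{1/2}(\partial\Omega)}$, and the linear term is then absorbed: either $\operatorname{dist}(k^2,E)\le k^2$, in which case $k\le k^3/\operatorname{dist}(k^2,E)$, or $\operatorname{dist}(k^2,E)>k^2$, in which case $(0,2k^2)\cap E=\emptyset$ forces $\lambda_1\ge 2k^2$ and hence $k$ is bounded by a constant depending only on $\Omega$. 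The step needing genuine care is precisely this tracking of the powers of $k$ --- above all the use of the harmonic extension in the first step, which saves the power that separates $k^3$ from $k^4$; the rest is routine, and the full computation is the one carried out in \cite[Proposition 2]{Beretta2016} and \cite[Lemma 1.4]{Ruland2021}, whose statements we are quoting here.
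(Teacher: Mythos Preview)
The paper does not supply its own proof of this lemma; it is simply quoted from \cite{Beretta2016} and \cite{Ruland2021}. Your argument is correct and is essentially the standard one carried out in those references: harmonic lift to reduce to $(-\Delta-k^2)v=k^2F$ in $H^1_0$, Fredholm alternative for well-posedness away from the Dirichlet spectrum, and a spectral expansion with the low/high eigenvalue split to obtain the $k^3/\operatorname{dist}(k^2,E)$ bound. The only cosmetic slip is the sign in $v_j$ (it should be $k^2F_j/(\lambda_j-k^2)$, not $-k^2F_j/(\lambda_j-k^2)$), which is irrelevant once you take norms.
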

\begin{lemma}\cite[Section 6.3]{Evans2016}\label{lem_interior estimate}
	Suppose that $u \in$ $H^{1}(\Omega)$ satisfies \eqref{eq:Helm} almost everywhere. Then for any $\Omega^{\prime} \subset\subset \Omega$,
	$$
	\|u\|_{H^{m+2, 2}\left(\Omega^{\prime}\right)} \leqslant C\|u\|_{L^2(\Omega)}
	$$
	for any $m\in \mathbb{N}$, 
	where the constant $C$ depends on $m$, the dimension $n$, the wave number $k$, and $\operatorname{dist}\left\{\Omega^{\prime}, \partial \Omega\right\}$.
\end{lemma}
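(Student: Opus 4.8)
The plan is to view \eqref{eq:Helm} as a Poisson equation whose right-hand side is its own zeroth-order term, and then to run the classical interior elliptic bootstrap, gaining two derivatives at each step. Since $u\in H^{1}(\Omega)$ satisfies $\Delta u=-k^{2}u$ almost everywhere, the datum $F:=-k^{2}u$ lies in $L^{2}(\Omega)$, with $\|F\|_{L^{2}(\omega)}=k^{2}\|u\|_{L^{2}(\omega)}$ for every $\omega\subset\subset\Omega$. The only external input is the standard interior $L^{2}$-regularity theorem for $-\Delta$ (see \cite[Section 6.3]{Evans2016}): if $\Delta v=g$ in an open set $W$ with $g\in H^{s}(W)$, then $v\in H^{s+2}_{\mathrm{loc}}(W)$ and, for every $V\subset\subset W$,
\[
\|v\|_{H^{s+2}(V)}\le C\bigl(\|g\|_{H^{s}(W)}+\|v\|_{L^{2}(W)}\bigr),\qquad C=C(n,s,\operatorname{dist}(V,\partial W)).
\]

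I would then fix $\Omega'\subset\subset\Omega$, set $d:=\operatorname{dist}(\Omega',\partial\Omega)$, pick $\ell\in\mathbb{N}$ with $2\ell\ge m+2$ (so $\ell$ depends only on $m$), and interpolate a nested family of smooth open sets $\Omega'=\Omega_{0}\subset\subset\Omega_{1}\subset\subset\cdots\subset\subset\Omega_{\ell}\subset\subset\Omega$ whose $\ell+1$ consecutive distances are each bounded below by $c_{\ell}d$ for some $c_{\ell}>0$ depending only on $\ell$; this is possible since the total thickness available between $\Omega'$ and $\partial\Omega$ equals $d$. Applying the quoted estimate with $s=0$ on $\Omega_{\ell-1}\subset\subset\Omega_{\ell}$ and $g=F\in L^{2}(\Omega_{\ell})$ gives $u\in H^{2}(\Omega_{\ell-1})$ with $\|u\|_{H^{2}(\Omega_{\ell-1})}\le C_{1}(1+k^{2})\|u\|_{L^{2}(\Omega_{\ell})}$. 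Since now $F=-k^{2}u\in H^{2}(\Omega_{\ell-1})$, the estimate with $s=2$ on $\Omega_{\ell-2}\subset\subset\Omega_{\ell-1}$ gives $u\in H^{4}(\Omega_{\ell-2})$ bounded by $C_{2}(1+k^{2})\|u\|_{H^{2}(\Omega_{\ell-1})}$; iterating, after $j$ steps one gets $u\in H^{2j}(\Omega_{\ell-j})$ and
\[
\|u\|_{H^{2j}(\Omega_{\ell-j})}\le\Bigl(\prod_{i=1}^{j}C_{i}(1+k^{2})\Bigr)\|u\|_{L^{2}(\Omega_{\ell})},
\]
each $C_{i}$ depending only on $n$, on $i$, and on $c_{\ell}d$. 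Taking $j=\ell$, using the inclusion $H^{2\ell}(\Omega')\hookrightarrow H^{m+2}(\Omega')$ (valid since $2\ell\ge m+2$), and estimating $\|u\|_{L^{2}(\Omega_{\ell})}\le\|u\|_{L^{2}(\Omega)}$, one arrives at the claimed bound with $C=C(m,n,k,\operatorname{dist}(\Omega',\partial\Omega))$.

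I do not expect a genuine obstacle: this is textbook interior elliptic regularity, and the work is purely in the bookkeeping. One must (i) choose the intermediate domains with mutual distances comparable to a fixed fraction of $d$, so that the final constant depends only on $\operatorname{dist}(\Omega',\partial\Omega)$ and $m$ and not on the particular chain; (ii) keep track of the factor $(1+k^{2})$ picked up at each of the $\ell\approx m/2$ bootstrap steps, which is precisely why $C$ must be allowed to depend jointly on $k$ and $m$; and (iii) observe that no regularity of $\partial\Omega$ is used, every estimate being interior. If one wants to land on the exact index $m+2$ rather than on some $H^{2\ell}\supseteq H^{m+2}$, it suffices to begin the chain with one application of the $s=1$ version of the regularity theorem — legitimate since $u\in H^{1}(\Omega)$ — and then continue with $s=2,4,\dots$, adjusting the parity of the count.
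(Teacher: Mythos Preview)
Your proposal is correct: it is precisely the interior elliptic bootstrap of \cite[Section 6.3]{Evans2016}, iterated on a nested family of subdomains to gain two derivatives at each step, with the wave-number entering only through the right-hand side $-k^{2}u$. The paper does not supply its own proof of this lemma --- it simply quotes the result from Evans --- so there is nothing further to compare; your write-up is an accurate expansion of the cited argument.
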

%In addition, we need the following %conclusions of Tikhonov regularization.
%\begin{lemma}\label{lem:Tikhonov}\cite{Tikhon%ov1977}
%For the linear ill-posed operator equations
%$$
%T \varphi=F ,
%$$
%where $T: X \rightarrow Y$ is a bounded %linear injective operator between Hilbert %spaces $X$ and $Y$, and $F \in R(T)$. The %Tikhonov functional 
%$$
%J_\alpha(\varphi):=\left\|T \varphi-%F\right\|^2+\alpha\left\|\varphi\right\|^2
%$$
%has a unique minimum $\varphi_\alpha$ in $X$ %for all $\alpha>0$, $F \in Y$. This minimum %is given by
%$$
%\varphi_\alpha=\left(T^* T+\alpha %I\right)^{-1}\left(T^* F\right) .
%$$
%The operator $T^* T+\alpha I$ is boundedly %invertible, so $\varphi_\alpha$ depends %continuously on $F$.
%\end{lemma}

The idea of the method of fundamental solutions (MFS) was initially proposed by Mathon and Johnston formulated the method numerically in \cite{Mathon1977}, where the solution is approximated by a linear combination of fundamental solutions.
Barnett and Betcke \cite{Barnett2008} gave the choices of charge points for the interior Helmholtz problem on analytical domains and the convergence rate bounds. They proposed that
\begin{equation}\label{eq:approx}
	u(x) \approx u^{(\mu)}(x)=\sum_{i=1}^M \mu_i \Phi(\hat{x}_{i},x) , \quad \hat{x}_{i} \in \partial O_{R}\,,
\end{equation}
where $\hat{x}_{i}$ are charge points equally spaced on a larger circle of radius $R>1$ (see Fig. \ref{fig:barnett}).
\begin{figure}[h]
	\centering  %图片全局居中  
	\includegraphics[width=0.5\textwidth]{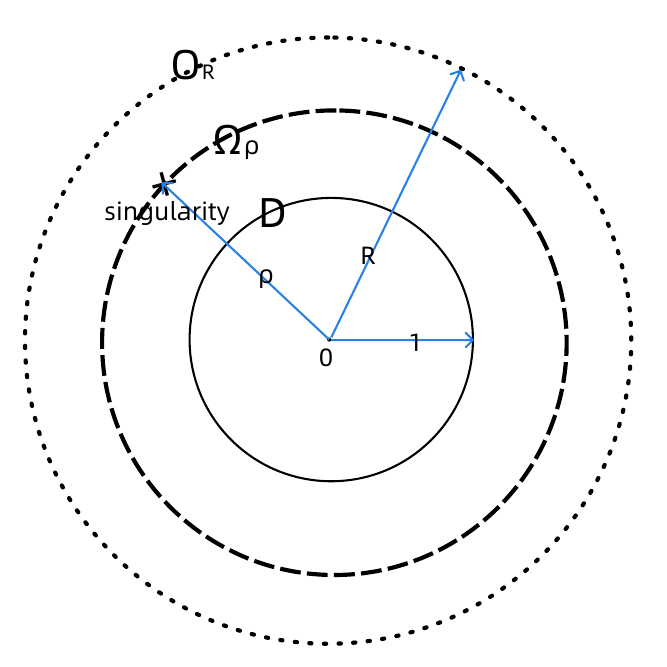}
	\caption{Geometry for the MFS in the unit disk.}
	\label{fig:barnett}
\end{figure}

Assume that $f$, defined on the unit circle, can be analytically continued to the annulus $\left\{z \in \mathbb{C}: \frac{1}{\rho}<|z|<\rho\right\}$ for some $\rho>1$, that is, the closest singularity of the analytic continuation of $f$ has the radius $\rho$ or $1 / \rho$. We then have asymptotically exponential decay of the Fourier coefficients,
\begin{equation}\label{eq:f_hat}
	|\hat{f}_{m}| \sim C \rho^{-|m|}, \quad|m| \rightarrow \infty
\end{equation}
for some constant $C$, where $\hat{f}_m$ denotes the $m$-th discrete Fourier coefficient of $f$.

Under the above assumption, there exists the following error estimate:
\begin{lemma}\label{FMS}
	Let $u$ defined on a unit disc $D$, assume that the boundary value $f$ can be analytically continued to be annulus $\{z\in \mathbb{C}:\frac{1}{\rho}<|z|<\rho\}$ for some $\rho>1$, i.e., the closest singularity of the analytic continuation of $f$ has the radius $\rho$ or $1/\rho$. Let $R>1$ and $M$ be even, then, for analytic boundary data $f$ obeying \eqref{eq:f_hat}, there exists MFS approximation $u^{(\mu)}$ in \eqref{eq:approx} with the boundary error satisfies
	\begin{equation}\label{eq:MFS_error}
		\|u-u^{(\mu)}\|_{L^2(\partial D)}\leqslant 
		\left\{
		\begin{array}{l}
			C\rho^{-M/2},\quad \rho < R^2,\\
			C\sqrt{M}R^{-M},\quad \rho = R^2,\\
			CR^{-M},\quad \rho >R^2,
		\end{array}
		\right.
	\end{equation}
	where $C$ depends on frequency $k$, $R$ and $f$, but independent of $M$.
\end{lemma}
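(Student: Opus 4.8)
The plan is to diagonalise the MFS ansatz \eqref{eq:approx} in Fourier modes on the unit circle by Graf's addition theorem, turning the estimate into a one–dimensional \emph{aliasing} computation, and then to sum the aliasing error separately in the regimes $\rho<R^2$, $\rho=R^2$, $\rho>R^2$. Concretely: since the charge points $\hat x_i=Re^{\mathrm i\theta_i}$, $\theta_i=2\pi i/M$, lie on $\partial O_R$ with $R>1$, Graf's theorem converges absolutely and uniformly on $\overline D$ and gives $\Phi(\hat x_i,x)=\frac{\mathrm i}{4}\sum_{m\in\mathbb Z}H_m^{(1)}(kR)J_m(k|x|)e^{\mathrm i m(\theta_x-\theta_i)}$. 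Summing over $i$ and setting $c_m:=\sum_{i=1}^{M}\mu_i e^{-\mathrm i m\theta_i}$, which is $M$–periodic in $m$ and conversely is realised, as a sequence over one period, by a unique $\mu$ (a DFT in the $M$-th roots of unity), one obtains on $\partial D$
$$u^{(\mu)}(e^{\mathrm i\theta})=\sum_{m\in\mathbb Z}\beta_m c_m e^{\mathrm i m\theta},\qquad \beta_m:=\tfrac{\mathrm i}{4}H_m^{(1)}(kR)J_m(k).$$
Since $M$ is even, $\mathcal I:=\{-M/2+1,\dots,M/2\}$ is a complete residue system modulo $M$, so $(c_m)$ is free exactly on $\mathcal I$.

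Next I would establish a two–sided estimate on the ``gains'' $\beta_m$: from the uniform (Debye) asymptotics of $J_m$ and $H^{(1)}_m$ of large order, $\beta_m=(4\pi m)^{-1}R^{-m}(1+o(1))$ as $|m|\to\infty$, and since $k^2\notin E$ (Lemma~\ref{lem:existence}) forces $J_m(k)\neq0$ for every $m$, no $\beta_m$ vanishes, whence there are $0<c_1(k,R)\le c_2(k,R)$ with
$$\frac{c_1(k,R)}{1+|m|}R^{-|m|}\ \leqslant\ |\beta_m|\ \leqslant\ \frac{c_2(k,R)}{1+|m|}R^{-|m|},\qquad m\in\mathbb Z,$$
so $|\beta_p/\beta_{p'}|\leqslant C\,R^{-(|p|-|p'|)}$ whenever $|p'|\leqslant|p|$. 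Then I would choose $\mu$ by matching $f$ on the resolvable modes: set $c_m:=\hat f_m/\beta_m$ for $m\in\mathcal I$ and extend $M$–periodically. With this $\mu$ the $m$-th Fourier coefficient of $f-u^{(\mu)}$ vanishes for $m\in\mathcal I$, and for $p=p'+\ell M$ with $p'\in\mathcal I$, $\ell\neq0$, it equals $\hat f_p-(\beta_p/\beta_{p'})\hat f_{p'}$; by Parseval,
$$\|f-u^{(\mu)}\|^2_{L^2(\partial D)}=\sum_{\ell\neq0}\ \sum_{p'\in\mathcal I}\Bigl|\hat f_{p'+\ell M}-\tfrac{\beta_{p'+\ell M}}{\beta_{p'}}\,\hat f_{p'}\Bigr|^2 .$$

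Finally I would insert $|\hat f_m|\leqslant C\rho^{-|m|}$ from \eqref{eq:f_hat} together with the ratio bound above. The families $|\ell|\geqslant2$ give terms not exceeding the stated order, so it suffices to treat $\ell=\pm1$: for $p=\pm(M/2+j)$ with $1\leqslant j\leqslant M/2$ one has $|p'|=M/2-j$ and $|p|-|p'|=2j$, whence the summand is $\leqslant C\bigl(\rho^{-(M/2+j)}+(\rho/R^2)^{j}\rho^{-M/2}\bigr)$. Squaring and summing over $j$ gives $C\rho^{-M}\sum_j\rho^{-2j}+C\rho^{-M}\sum_{j=1}^{M/2}(\rho/R^2)^{2j}$; the geometric series in $\rho/R^2$ is $O(1)$ if $\rho<R^2$, equals $M/2$ if $\rho=R^2$, and is $O((\rho/R^2)^{M})$ if $\rho>R^2$, so the total is $O(\rho^{-M})$, $O(M\rho^{-M})$, $O(R^{-2M})$ respectively. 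Taking square roots yields \eqref{eq:MFS_error}, the constant depending on $k,R$ (through $c_1,c_2$) and on $f$ (through the constant in \eqref{eq:f_hat}) but not on $M$.

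The main obstacle is the estimate of $\beta_m$ in the second step: obtaining the \emph{sharp exponential rate} $|\beta_m|\asymp|m|^{-1}R^{-|m|}$ with a matching lower bound requires the uniform asymptotic expansions of Bessel functions of large order, valid through the turning region $m\approx k$, and the non–vanishing of each $\beta_m$ must be pinned to the non–resonance hypothesis $k^2\notin E$ (a zero of some $J_m(k)$ would be an interior Dirichlet eigenvalue and would obstruct the construction). Once that input is in hand, the remaining steps are elementary: Graf's addition theorem and the summation of a single geometric series.
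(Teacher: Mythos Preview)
The paper does not prove this lemma: it simply cites \cite[Theorem~6]{Barnett2008}. Your proposal is, in fact, a faithful reconstruction of the Barnett--Betcke argument itself --- Graf's addition theorem to diagonalise the MFS kernel in Fourier modes on $\partial D$, the choice $c_m=\hat f_m/\beta_m$ on a complete residue system to kill the resolvable modes, and an aliasing sum controlled by the asymptotics $|\beta_m|\asymp |m|^{-1}R^{-|m|}$ and the decay $|\hat f_m|\lesssim\rho^{-|m|}$. So you are not taking a different route from the paper; you are supplying the proof the paper outsourced.

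Two small remarks. First, your appeal to $k^2\notin E$ to guarantee $J_m(k)\neq 0$ is for the \emph{unit disk} $D$, whose Dirichlet spectrum is exactly $\{j_{m,n}^2\}$; this hypothesis is not stated in the lemma itself but is part of the paper's standing Assumption~\ref{ass}, so be explicit that you are invoking it for $D$ (not for $\Omega$). The non-vanishing of $H_m^{(1)}(kR)$ for real $kR>0$ is automatic and needs no extra hypothesis. Second, in your asymptotic line you wrote $\beta_m=(4\pi m)^{-1}R^{-m}(1+o(1))$; to cover $m<0$ this should read $|\beta_m|=(4\pi|m|)^{-1}R^{-|m|}(1+o(1))$, which you do use correctly in the two-sided bound that follows. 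With those cosmetic fixes the argument is complete and matches the cited source.
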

For the proof, see \cite[Theorem 6]{Barnett2008}.
Based on Lemma \ref{FMS}, we can obtain the estimate for the norm of the coefficient vectors $\bm{\mu}=:(\mu_1,\cdots,\mu_M)$ in \eqref{eq:approx}.
\begin{lemma}
	Let $u$ be defined on a unit disc $D$, and $R<\rho$, with fixed analytic boundary data $f$ satisfying \eqref{eq:f_hat}. %Then there exists a sequence of coefficient vectors $\bm{\mu}$ with bounded norm 
	Then the vector $\bm{\mu}$ corresponds to the MFS approximation $u^{(\mu)}$ is bounded by
	\begin{equation}\label{eq:MFS_mu}
		\|\bm{\mu}\|^{2}\leqslant \frac{C}{M}
	\end{equation}
	with corresponding boundary error norm \eqref{eq:MFS_error} as in Lemma \ref{FMS}, where $C$ is a constant independent on $f$.
\end{lemma}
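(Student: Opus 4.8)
The plan is to reuse the explicit Fourier-analytic construction of the MFS coefficient vector $\bm{\mu}$ that underlies Lemma~\ref{FMS} (Barnett--Betcke, Theorem~6) and to estimate its discrete $\ell^2$-norm directly. First I would expand each fundamental solution on the unit circle by Graf's addition theorem: writing $\hat{x}_i=(R,\phi_i)$ with $\phi_i=2\pi i/M$ and $x=(1,\phi)\in\partial D$, and using that $1<R$,
$$\Phi(\hat{x}_i,x)=\frac{\mathrm{i}}{4}H_0^{(1)}(k|\hat{x}_i-x|)=\frac{\mathrm{i}}{4}\sum_{m\in\mathbb{Z}}H_m^{(1)}(kR)\,J_m(k)\,e^{\mathrm{i}m(\phi-\phi_i)}.$$
Hence the $m$-th Fourier coefficient of $u^{(\mu)}|_{\partial D}$ equals $\frac{\mathrm{i}}{4}H_m^{(1)}(kR)J_m(k)\,\widetilde{\mu}_m$, where $\widetilde{\mu}_m:=\sum_{i=1}^M\mu_i e^{-\mathrm{i}m\phi_i}$ is the length-$M$ discrete Fourier transform of $\bm{\mu}$ (periodic in $m$ with period $M$). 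The coefficient vector behind \eqref{eq:MFS_error} is the one reproducing the Fourier modes of $f=u|_{\partial D}$ in the band $|m|<M/2$, i.e. $\widetilde{\mu}_m=4\hat{f}_m/(\mathrm{i}H_m^{(1)}(kR)J_m(k))$ there, the uncaptured modes $|m|\ge M/2$ accounting for the boundary error stated in Lemma~\ref{FMS}.

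Next I would invoke the discrete Parseval identity for the length-$M$ DFT, which turns the sum over collocation points into a sum over frequencies:
$$\|\bm{\mu}\|^2=\sum_{i=1}^M|\mu_i|^2=\frac{1}{M}\sum_{|m|<M/2}|\widetilde{\mu}_m|^2=\frac{16}{M}\sum_{|m|<M/2}\frac{|\hat{f}_m|^2}{|H_m^{(1)}(kR)|^2\,|J_m(k)|^2}.$$
It then suffices to bound the sum by a constant independent of $M$, which I would do by majorizing it by the full series $\sum_{m\in\mathbb{Z}}|\hat{f}_m|^2/(|H_m^{(1)}(kR)J_m(k)|^2)$ and proving convergence. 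For the latter I would use the large-order Bessel asymptotics $J_m(k)\sim(2\pi m)^{-1/2}(ek/2m)^m$ and $H_m^{(1)}(kR)\sim-\mathrm{i}(2/\pi m)^{1/2}(2m/ekR)^m$, which give $|H_m^{(1)}(kR)J_m(k)|^{-2}\sim\pi^2 m^2 R^{2|m|}$; combined with the decay $|\hat{f}_m|\sim C\rho^{-|m|}$ from \eqref{eq:f_hat}, the general term behaves like $m^2(R/\rho)^{2|m|}$, and since $R<\rho$ the series converges. The finitely many low-order terms cause no trouble, since $J_m(k)\ne0$ for every $m$ under the standing non-resonance assumption ($k^2$ not a Dirichlet eigenvalue of the disc) and $H_m^{(1)}$ has no real positive zeros. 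This yields $\|\bm{\mu}\|^2\le C/M$ with $C$ depending on $f$ (through $\rho$ and the constant in \eqref{eq:f_hat}), on $k$ and on $R$, but not on $M$.

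The step I expect to be the main obstacle is precisely this uniform-in-$M$ estimate of the summed reciprocals $1/|H_m^{(1)}(kR)J_m(k)|$: one has to play the exponential decay of $\hat{f}_m$ against the exponential growth $1/|J_m(k)H_m^{(1)}(kR)|\sim\pi m R^{|m|}$ and verify that the decay dominates, which is exactly the role of the hypothesis $R<\rho$ and is indispensable. Everything else --- Graf's addition theorem, discrete Parseval, and the classical uniform asymptotics for Bessel functions of large order --- is routine bookkeeping.
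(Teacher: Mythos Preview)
Your proposal is correct and follows essentially the same route as the paper's proof. The paper simply quotes from Barnett--Betcke (Theorem~6) the asymptotic $\hat{\mu}_j\sim C\,\tfrac{|j|}{M}\,(R/\rho)^{|j|}$ for the discrete Fourier components of $\bm{\mu}$ and then applies the discrete Parseval relation $\|\bm{\mu}\|^2=M\|\hat{\bm{\mu}}\|^2$ together with the convergence of $\sum_j |j|^2(R/\rho)^{2|j|}$; you reproduce exactly this computation, the only difference being that you unpack the citation---deriving the asymptotic yourself via Graf's addition theorem and large-order Bessel asymptotics---rather than invoking it as a black box.
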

\begin{proof}
	Expanding the coefficient vector $\bm{\mu}$ with a discrete Fourier basis labeled by $-\frac{M}{2}<j \leqslant \frac{M}{2}$,
	$$
	\mu_l=\sum_{j=-M / 2+1}^{M / 2} \hat{\mu}_j \mathrm{e}^{\mathrm{i} j \phi_l}, \quad \hat{\mu}_j=\frac{1}{M} \sum_{l=1}^M \mu_l \mathrm{e}^{-\mathrm{i} j \phi_l},
	$$
	where $\phi_{l}=\frac{2\pi l}{M}$.
	The Theorem 6 in \cite{Barnett2008} stated that the component $\hat{\mu}_j$ satisfies
	$$
	\hat{\mu}_j \sim C \frac{|j|}{M}\left(\frac{R}{\rho}\right)^{|j|}, \quad-\frac{M}{2}<j \leqslant \frac{M}{2}, \quad j \neq 0\,.
	$$
	Thus, 
	\begin{equation*}
		\|\bm{\mu}\|^2=M\|\hat{\bm{\mu}}\|^2\leqslant CM\sum\limits_{j=-M/2+1}^{M/2} \frac{|j|^2}{M^{2}}\left(\frac{R}{\rho}\right)^{2|j|}
		\leqslant \frac{C}{M}
	\end{equation*}
	for some constant $C$.
\end{proof}

When $\|u\|\leqslant 1$ in $\Omega$, the constant $C$ in \eqref{eq:MFS_error} and \eqref{eq:MFS_mu} would not depend on the solution $u$ itself. On account of the linear dependence of $\bm{\mu}$ on the solution, the following proposition is valid.
\begin{proposition}\label{bound-mu}
	Suppose that the solution is uniquely continuable to a sufficiently larger domain $\Omega_{\rho}$ than $O_{R}$ such that $\rho >R^{2}$ (see Fig. \ref{fig:case1} for example), then
	\begin{equation}\label{eq:MFS_error_L2}
		\|u-u^{(\mu)}\|_{L^2(\partial D)}\leqslant CR^{-M}\|u\|_{L^{2}(\Omega_{\rho})},\quad \rho >R^2,
	\end{equation}
	where $C$ depends on $k$, $R$, $d$ and $\Omega$, but independent of $M$. Moreover,
	\begin{equation}\label{eq:estimate_mu}
		\|\bm{\mu}\|^2\leqslant \frac{C}{M}\|u\|_{L^{2}(\Omega_{\rho})}^{2}.
	\end{equation}
\end{proposition}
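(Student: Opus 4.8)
The plan is to reduce to a normalized solution by homogeneity and then read both bounds off Lemma \ref{FMS} and the preceding lemma on $\|\bm{\mu}\|$. The maps $u\mapsto u^{(\mu)}$ in \eqref{eq:approx} and $u\mapsto\bm{\mu}$ are linear in $u$ (equivalently in the trace $f=u|_{\partial D}$), so the two sides of \eqref{eq:MFS_error_L2} scale by the same power of a scalar multiplying $u$, and likewise for \eqref{eq:estimate_mu}. It therefore suffices to prove both inequalities with a constant $C$ independent of the solution under the normalization $\|u\|_{L^2(\Omega_\rho)}=1$; the general statements then follow on multiplying through by $\|u\|_{L^2(\Omega_\rho)}$, respectively by its square.

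The core step is to show that for every solution $u$ of \eqref{eq:Helm} uniquely continuable to $\Omega_\rho$ with $\rho>R^2$ and normalized by $\|u\|_{L^2(\Omega_\rho)}\leqslant 1$, one may choose the decay constant in \eqref{eq:f_hat} uniformly, i.e. $|\hat f_m|\leqslant C\tilde\rho^{-|m|}$ for some fixed $\tilde\rho\in(R^2,\rho)$ and some $C=C(k,R,d,\Omega)$ (and it is in fact only this upper bound, rather than the full asymptotics in \eqref{eq:f_hat}, that Lemma \ref{FMS} needs for its error bound). Fixing such a $\tilde\rho$, on the disc of radius $\tilde\rho$ (contained in $\Omega_\rho$) the continued solution is smooth by interior regularity (Lemma \ref{lem_interior estimate}) and admits the separated expansion $u(r,\theta)=\sum_m c_m J_m(kr)\mathrm{e}^{\mathrm{i}m\theta}$, whence $\hat f_m=c_m J_m(k)$, while by orthogonality $\|u\|_{L^2(\Omega_\rho)}^2\geqslant 2\pi\sum_m|c_m|^2\int_0^{\tilde\rho}|J_m(kr)|^2\,r\,\mathrm{d}r$. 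Comparing these and invoking the large-order asymptotics of $J_m$ --- which bound the ratio $J_m(k)^2/\int_0^{\tilde\rho}|J_m(kr)|^2 r\,\mathrm{d}r$ by $C|m|\,\tilde\rho^{-2|m|}$ --- gives $|\hat f_m|^2\leqslant C|m|\,\tilde\rho^{-2|m|}\|u\|_{L^2(\Omega_\rho)}^2$, and the polynomial factor $|m|$ is absorbed by replacing $\tilde\rho$ with a slightly smaller radius still exceeding $R^2$. Feeding this uniform $C$ into the third alternative of \eqref{eq:MFS_error} (valid since $\tilde\rho>R^2$) yields $\|u-u^{(\mu)}\|_{L^2(\partial D)}\leqslant CR^{-M}$, and into \eqref{eq:MFS_mu} yields $\|\bm{\mu}\|^2\leqslant C/M$; undoing the normalization produces \eqref{eq:MFS_error_L2} and \eqref{eq:estimate_mu}.

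I expect the core step to be the main obstacle: one must extract the geometric rate $\tilde\rho^{-|m|}$ of the Fourier coefficients of $f$ from nothing more than an $L^2$ bound on the continued solution over $\Omega_\rho$, uniformly over all admissible $u$, and verify that no residual $u$-dependence hides in the constants coming from the Bessel asymptotics or from the shrinking of the analyticity radius. If one prefers to avoid explicit Bessel estimates, the same uniformity can be obtained abstractly from Cauchy-type (quantitative analytic-continuation) bounds for Helmholtz solutions on an interior subdomain of $\Omega_\rho$ --- again via Lemma \ref{lem_interior estimate} --- which then feed into Lemma \ref{FMS} in exactly the same way.
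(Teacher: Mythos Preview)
Your proposal is correct and its core idea---normalize by $\|u\|_{L^2(\Omega_\rho)}$ and exploit linearity---is exactly the paper's proof, which is a one-line scaling argument: set $\tilde u=u/\|u\|_{L^2(\Omega_\rho)}$ and apply \eqref{eq:MFS_error} and \eqref{eq:MFS_mu} to $\tilde u$.

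Where you go further is in actually justifying what the paper only \emph{asserts} before the proposition, namely that for normalized solutions the constant $C$ in \eqref{eq:MFS_error}--\eqref{eq:MFS_mu} is independent of $u$. The paper does establish this, but in a separate result, Lemma~\ref{analytic-domain} (proved in Appendix~\ref{app:2}): there the bound $|\hat f_1(m)|\leqslant C\rho^{-|m|}\|u\|_{L^2(\tilde\Omega)}$ is obtained by writing $u$ as a single-layer potential on $\partial O_\rho$ and using the Graf addition formula for $H_0^{(1)}$ together with the large-order asymptotics of $H_m^{(1)}(k\rho)J_m(k)$. Your route---expand $u$ directly in the interior Bessel basis $J_m(kr)e^{\mathrm{i}m\theta}$ and compare $|c_m J_m(k)|$ against the $L^2$ mass $\int_0^{\tilde\rho}|J_m(kr)|^2 r\,\mathrm{d}r$---is a legitimate alternative and arguably more direct, though it requires the same kind of Bessel asymptotics. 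Either way yields the uniform decay rate that Lemma~\ref{FMS} needs for its third alternative, so there is no gap in your argument.
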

\begin{proof}
	Let $\tilde{u}(x)=\displaystyle\frac{u(x)}{\|u\|_{L^{2}(\Omega_{\rho})}}$, replacing $u$ in \eqref{eq:MFS_error} with it, the conclusions \eqref{eq:MFS_error_L2} and \eqref{eq:estimate_mu} are consequently obtained.
\end{proof}
\begin{remark}\label{remark:interior error}Denote 
	$$
	d:=\min \limits_{j}\frac{\left|k^2-E_j\right|}{E_j}\,,
	$$ where $E_j$ are the domain's Dirichlet eigenvalues. 
	From \cite[Eq. (7)]{Kuttler1978}, the interior error of the solution is controlled by
	$$
	\left\|u-u^{(\mu)}\right\|_{L^2(D)} \leqslant \frac{C_{\Omega}}{d}\left\|u-u^{(\mu)}\right\|_{L^2(\partial D)}\leqslant CR^{-M}\|u\|_{L^{2}(\Omega_{\rho})},
	$$
	where $C_{\Omega}$ is a domain-dependent constant and $C$ depends on $k$, $d$, $R$ and $\Omega$, but independent of $M$.
\end{remark}

Finally, we introduce the Runge approximation. A variant of the classical Runge approximation property for uniformly elliptic equations states that if $\Omega_1, \Omega_2$ are bounded, open Lipschitz sets (e.g. balls) with $\Omega_1 \subset\subset \Omega_2$, it is possible to approximate solutions to an equation in the smaller domain $\Omega_{1}$ by solutions of the same equation in the larger domain $\Omega_{2}$. The quantitative Runge approximation indicates that the approximated solution on the larger domain has the exponential bound.
Furthermore, if the solution itself is assumed to be
a solution in a slightly larger domain, one obtains the polynomial bound instead \cite{Salo2019}.
\begin{lemma}\label{lem:runge_quan}
	(Quantitative Runge's approximation \cite{Salo2019}). Let $\tilde{\Omega}$ be a domain with $\Omega_1 \subset\subset \tilde{\Omega} \subset\subset$ $\Omega_2$ where $\Omega_1, \Omega_2$ are bounded, open Lipschitz sets. Denote the elliptic operator $L=\partial_i a^{i j} \partial_j+c$ with symmetric coefficient matrix $(a_{i j})\in L^{\infty}\left(\Omega_2, \mathbb{R}^{n \times n}\right)$ and $c\in L^{\infty}(\Omega_{2})$, and for some $K \geqslant 1$ such that 
	$$
	\begin{aligned}
		&K^{-1}|\xi|^2 \leqslant a^{i j}(x) \xi_i \xi_j \leqslant K|\xi|^2\text{ for a.e. $x \in \Omega_2$ and for all $\xi \in \mathbb{R}^n$},\\
		&\|c\|_{L^{\infty}\left(\Omega_2\right)} \leqslant K,\\
		&\left\|\nabla a^{i j}\right\|_{L^{\infty}\left(\Omega_2\right)} \leqslant K,\quad n \geqslant 3.
	\end{aligned}
	$$
	Define
	$$
	\begin{aligned}
		&S_1:=\left\{w \in H^1\left(\Omega_1\right): L w=0 \text { in } \Omega_1\right\}\\
		&S_{2}:=\left\{h \in H^1\left(\Omega_2\right): L h=0 \text { in } \Omega_2,\left.h\right|_{\partial \Omega_2} \in \tilde{H}^{1 / 2}(\Gamma)\right\},
	\end{aligned}
	$$
	where
	$$
	\tilde{H}^{1 / 2}(\Gamma)=\text { closure of }\left\{g \in H^{1 / 2}\left(\partial \Omega_2\right) ; \operatorname{supp}(g) \subset \Gamma\right\} \text { in } H^{1 / 2}\left(\partial \Omega_2\right)
	$$
	with $\Gamma$ being an open subset of $\partial \Omega_2$. It states that the boundary value of the solution on $\Omega_2$ vanishes outside $\Gamma$. Assume that 0 is not a Dirichlet eigenvalue for $L$ in $\Omega_{2}$.
	Then, for any $\varepsilon \in(0,1)$, there exists a constant $C$ (depending on $\Omega_1, \Omega_2, \tilde{\Omega}$, $c$, $n$, $K$) such that
	
	\iffalse	
	(i) for $h \in S_{1}$, there exists $u \in S_{2}$ with
	$$
	\left\|h-\left.u\right|_{\Omega_1}\right\|_{L^2\left(\Omega_1\right)} \leqslant \varepsilon,
	$$
	\fi
	(i) for $h \in S_{1}$, there exists $\theta>0$ and $u \in S_{2}$ with
	\begin{equation}\label{eq:runge_case3}
		\left\|h-\left.u\right|_{\Omega_1}\right\|_{L^2\left(\Omega_1\right)} \leqslant \varepsilon\|h\|_{H^1\left(\Omega_1\right)}, \quad\|u\|_{H^{1 / 2}\left(\partial \Omega_2\right)} \leqslant C e^{C \varepsilon^{-\theta}}\|u\|_{L^2\left(\Omega_1\right)}.
	\end{equation}
	
	(ii) For $\tilde{h} \in H^1(\tilde{\Omega})$ with $L \tilde{h}=0$ in $\tilde{\Omega}$,  there exists $\beta\geqslant 1$ and $u \in S_{2}$ with
	\begin{equation}\label{eq:runge_case2}
		\left\|\tilde{\left.h\right|}_{\Omega_1}-\left.u\right|_{\Omega_1}\right\|_{L^2\left(\Omega_1\right)} \leqslant \varepsilon\|\tilde{h}\|_{H^1(\tilde{\Omega})}, \quad\|u\|_{H^{1 / 2}\left(\partial \Omega_2\right)} \leqslant C \varepsilon^{-\beta}\left\|\tilde{\left.h\right|}_{\Omega_1}\right\|_{L^2\left(\Omega_1\right)}.
	\end{equation}
\end{lemma}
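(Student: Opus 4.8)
The plan is to treat Lemma~\ref{lem:runge_quan} as a quantitative refinement of the classical Runge property of \cite{Lax1956,Malgrange1956}, and to build the argument from three blocks: an operator-theoretic reformulation, a quantitative unique continuation estimate, and an abstract lemma that converts the latter into an approximation rate. First I would recast the statement in terms of the Poisson-type map $P_\Gamma\colon \tilde H^{1/2}(\Gamma)\to L^2(\Omega_1)$ that sends boundary data $g$ (supported in $\Gamma$) to $u|_{\Omega_1}$, where $u$ solves $Lu=0$ in $\Omega_2$ with $u|_{\partial\Omega_2}=g$; this is well defined, bounded, and compact (by interior regularity, since $\Omega_1\subset\subset\Omega_2$), and its range is exactly $\{u|_{\Omega_1}:u\in S_2\}$. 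The qualitative Runge property is density of this range in $S_1$ for the $L^2(\Omega_1)$ topology, which follows from Hahn--Banach together with \emph{weak unique continuation} for $L$: a functional annihilating the range gives, after solving the adjoint equation, a solution of $L^{*}$ with vanishing Cauchy data on $\Gamma$, hence vanishing. Statements (i) and (ii) are then precisely bounds on the $H^{1/2}(\partial\Omega_2)$-norm of a near-minimal-norm approximate preimage of $h$, resp.\ of $\tilde h|_{\Omega_1}$, under $P_\Gamma$.

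Second, and this is the analytic core, I would quantify the unique continuation used above. Passing to the adjoint $P_\Gamma^{*}$ and its singular system, the size of the control needed to reach a given approximation error is governed by a \emph{conditional stability} (propagation of smallness) estimate for solutions $v$ of $L^{*}v=0$: namely, $\|v\|_{L^2(\Omega_1)}$ controls $\|v\|$ on larger sets and near $\partial\Omega_2$ up to a modulus of continuity. Using Carleman estimates and the resulting three-balls inequalities, together with their boundary/Cauchy-data versions needed because $\Gamma$ is only part of $\partial\Omega_2$ --- which hold for $L^\infty$ coefficients when $n=2$ by Alessandrini-type arguments and require the bound $\|\nabla a^{ij}\|_{L^\infty}\le K$ when $n\ge 3$ --- one gets a \emph{logarithmic} modulus in the general case (i); whereas when the datum already lives on the intermediate domain $\tilde\Omega$, so that smallness propagates across the collar $\Omega_2\setminus\tilde\Omega$ with a positive margin, the three-balls inequality upgrades the modulus to a \emph{H\"older} one, which is what separates (i) from (ii). The dependence of the constant on $\Omega_1,\tilde\Omega,\Omega_2$ enters through the chains of balls in the propagation, and on $c,n,K$ through the Carleman weights.

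Third, I would invoke the abstract duality lemma (as in \cite{Salo2019}) that turns a conditional stability estimate for $P_\Gamma^{*}$ into a quantitative Runge statement for $P_\Gamma$. Concretely, one expands $h$ (or $\tilde h|_{\Omega_1}$) in the eigenbasis of the self-adjoint compact operator $P_\Gamma^{*}P_\Gamma$, truncates at a spectral threshold, approximates by the low-frequency part, and estimates its preimage norm by the conditional stability bound applied to the dual solution; the truncation level is chosen in terms of $\varepsilon$. A logarithmic dual modulus forces the preimage norm to blow up like $Ce^{C\varepsilon^{-\theta}}$, giving \eqref{eq:runge_case3}, while a H\"older dual modulus gives only a polynomial blow-up $C\varepsilon^{-\beta}$, giving \eqref{eq:runge_case2}; the location of the cut-off is exactly where the exponential-versus-polynomial dichotomy is produced.

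I expect the main obstacle to be the quantitative unique continuation step with \emph{sharp} moduli and with explicit, carefully tracked dependence on the three nested domains --- especially the boundary/Cauchy-data propagation needed to handle data on $\Gamma$ rather than all of $\partial\Omega_2$, and the low-regularity coefficient setting ($a^{ij}\in L^\infty$ for $n=2$, $a^{ij}\in W^{1,\infty}$ for $n\ge 3$), where the Carleman machinery must be applied with care. By comparison the functional-analytic and duality steps are soft. The full details are in \cite{Salo2019}.
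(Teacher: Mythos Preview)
The paper does not prove this lemma at all: it is stated as a preliminary result and attributed directly to \cite{Salo2019}, with no argument given. Your sketch is therefore not competing with anything in the paper; it is an outline of the proof in the cited reference itself, and as such it is accurate --- the three blocks you describe (Poisson operator $P_\Gamma$ and density of its range, quantitative unique continuation via three-balls/Carleman estimates yielding logarithmic versus H\"older moduli, and the abstract spectral/duality lemma converting stability for $P_\Gamma^*$ into a cost bound for approximate preimages under $P_\Gamma$) are exactly the architecture of \cite{Salo2019}. Since the paper treats this as a black box, simply citing \cite{Salo2019} would already match what the paper does.
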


\section{Theoretical results}\label{sec:theoretical results}
In this section, the error estimates for learning based algorithm introduced in Section 2 are investigated, including the three different cases, respectively. To this end, the following assumption is necessary.
\begin{assumption}\label{ass}
	Suppose that $k^{2}$ is not any Dirichlet eigenvalue of the opposite of the Laplacian $-\Delta$ for the domain as in Lemma \ref{lem:existence}. Let $\Omega$ be a bounded piecewise smooth domain such that $\Omega\subset\subset D$, where $D=\{x:|x|\leqslant 1\}$ is the unit disk in $\mathbb{R}^n$.
	Let $\left\{x_j\right\}_{j=1}^N$ the collocation points equally placed on $\partial \Omega$. Besides,  $\left\{\hat{x}_i\right\}_{i=1}^M$ be poles equally distributed on a larger circle $\partial O_{R}$ with radius $R>1$, where $D \subset\subset O_{R}$, see Figure \ref{fig:domains} for illustration.
\end{assumption}
% In short, we have 
% $$
% \Omega\subset\subset D\subset\subset O_{R}\subset\subset \Omega_{\varepsilon}.
% $$
% $\Omega_{\varepsilon}$ will be useful in Case 2 and Case 3.

\begin{lemma}\label{analytic-domain}
	Assume that $u(\bm{x})$ satisfies Helmholtz equation on $\tilde{\Omega}$, and $O_1\subset\subset O_\rho \subset\subset \tilde{\Omega}$ with $\rho>1$, then there exists $f(z)$ that is analytic in $\{z\in\mathbb{C}|  1/\rho <|z|<\rho\}$ satisfying
	\[
	f(e^{\mathrm{i}\theta})=u(\bm{x})|_{\partial O_1}.
	\]
	Moreover, for $f_1(\theta)=f(e^{\mathrm{i}\theta})$, there exists a constant $C$ depending only on $k$ and $\rho$, such that
	\[
	\hat{f}_1(m)\leq \frac{C}{\rho^{m}}\|u\|_{L^2(\tilde{\Omega})}.
	\]
\end{lemma}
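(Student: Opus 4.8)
The plan is to use the explicit Fourier--Bessel representation of a Helmholtz solution on a disk, which is precisely the structure that generates exponentially decaying angular Fourier coefficients; the $L^2(\tilde\Omega)$ bound then follows from Parseval in the radial variable together with uniform‑in‑order estimates for Bessel functions. Take $O_1$ and $O_\rho$ to be the disks of radius $1$ and $\rho$ about the origin. Since $O_\rho\subset\subset\tilde\Omega$, choose $\rho'>\rho$ with $\overline{O_{\rho'}}\subset\tilde\Omega$; then $u$ solves $\Delta u+k^2u=0$ in $O_{\rho'}$ and is analytic there. Writing $u$ in polar coordinates, the $m$‑th angular Fourier coefficient $\hat u_r(m):=\frac{1}{2\pi}\int_0^{2\pi}u(re^{\mathrm i\theta})e^{-\mathrm i m\theta}\,\mathrm d\theta$ solves Bessel's equation of order $|m|$ in the argument $kr$ and stays bounded as $r\to 0$, so $\hat u_r(m)=c_mJ_{|m|}(kr)$ for constants $c_m$ and all $0\le r<\rho'$. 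In particular $\hat f_1(m)=\hat u_1(m)=c_mJ_{|m|}(k)$ and $u|_{\partial O_1}(\theta)=\sum_m c_mJ_{|m|}(k)e^{\mathrm i m\theta}$, so once the decay $|\hat f_1(m)|\le C\rho^{-|m|}\|u\|_{L^2(\tilde\Omega)}$ is known, the Laurent series $f(z):=\sum_{m\in\mathbb Z}\hat f_1(m)z^m$ converges in $\{1/\rho<|z|<\rho\}$ (nonnegative powers give radius of convergence $\ge\rho$; negative powers converge for $|z|>1/\rho$), is analytic there, and reproduces $u|_{\partial O_1}$ on $|z|=1$ by construction.

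To obtain the decay I would first bound the $c_m$ by radial Parseval: integrating $\sum_m|c_m|^2|J_{|m|}(kr)|^2=\frac{1}{2\pi}\int_0^{2\pi}|u(re^{\mathrm i\theta})|^2\,\mathrm d\theta$ against $r\,\mathrm dr$ over $[0,\rho']$ gives, for every $m$,
\[
|c_m|^2\int_0^{\rho'}|J_{|m|}(kr)|^2\,r\,\mathrm dr\le\frac{1}{2\pi}\|u\|_{L^2(O_{\rho'})}^2\le\frac{1}{2\pi}\|u\|_{L^2(\tilde\Omega)}^2 .
\]
Combining this with $\hat f_1(m)=c_mJ_{|m|}(k)$ reduces everything to the scalar inequality $|J_{|m|}(k)|^2\big/\!\int_0^{\rho'}|J_{|m|}(kr)|^2 r\,\mathrm dr\le C\rho^{-2|m|}$, with $C$ allowed to depend on $k,\rho,\rho'$.

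This last step is where analyticity is genuinely used. For $\nu=|m|$ large enough that $k\rho'\le\sqrt{2(\nu+1)}$, the leading term of the power series for $J_\nu$ dominates, yielding $J_\nu(k)\le (k/2)^\nu/\Gamma(\nu+1)$ and $J_\nu(kr)\ge\frac12(kr/2)^\nu/\Gamma(\nu+1)>0$ for $0<r\le\rho'$, whence
\[
\frac{|J_\nu(k)|^2}{\int_0^{\rho'}|J_\nu(kr)|^2\,r\,\mathrm dr}\le\frac{(k/2)^{2\nu}\,\Gamma(\nu+1)^{-2}}{\tfrac14(k/2)^{2\nu}\,\Gamma(\nu+1)^{-2}\int_0^{\rho'}r^{2\nu+1}\,\mathrm dr}=\frac{4(2\nu+2)}{\rho'^{\,2\nu+2}}\le C\rho^{-2\nu},
\]
because $(\rho'/\rho)^{2\nu}$ outgrows the polynomial factor $2\nu+2$. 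The finitely many remaining orders are harmless since each denominator is a fixed positive number. This gives $|\hat f_1(m)|\le C\rho^{-|m|}\|u\|_{L^2(\tilde\Omega)}$ and closes the argument.

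The main obstacle is exactly this scalar Bessel estimate: plain interior elliptic regularity (Lemma~\ref{lem_interior estimate}) only delivers polynomial decay of $\hat f_1(m)$, so one must invoke the Fourier--Bessel/analytic structure and a quantitative, uniform‑in‑order bound for $J_\nu$. A secondary point worth flagging is that the clean rate $\rho^{-|m|}$ is bought by spending a bit of the room $O_\rho\subset\subset\tilde\Omega$ (I integrated out to $\rho'>\rho$); integrating only up to $r=\rho$ instead picks up an extra factor $\sqrt{|m|}$, which changes neither the annulus of analyticity of $f$ nor any later use of the lemma, so the stated estimate should be read either with $C$ depending mildly on $\tilde\Omega$ as well or with the rate replaced by any $\rho''<\rho$. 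The higher‑dimensional case is analogous, with spherical harmonics and spherical Bessel functions replacing $e^{\mathrm i m\theta}$ and $J_{|m|}$.
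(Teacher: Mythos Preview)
Your argument is correct and takes a genuinely different route from the paper's. The paper represents $u$ in $O_\rho$ as a single-layer potential with density $g$ on $\partial O_\rho$, expands the kernel $H_0^{(1)}(k|\,\cdot\,-\rho e^{\mathrm i\phi}|)$ via Graf's addition theorem, and reads off $\hat f_1(m)=\frac{\pi\mathrm i}{2}H^{(1)}_m(k\rho)J_m(k)\,\hat g(m)$; the decay $\rho^{-|m|}$ then comes from the large-order asymptotics of $H^{(1)}_m(k\rho)J_m(k)$, and the $L^2$ norm of $u$ enters through $\|g\|_{L^2}$. You instead separate variables directly, writing $\hat u_r(m)=c_mJ_{|m|}(kr)$, and extract the exponential decay from the scalar ratio $|J_\nu(k)|^2\big/\int_0^{\rho'}|J_\nu(kr)|^2r\,\mathrm dr$ via the leading-term power-series bounds for $J_\nu$. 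Your route is more self-contained: it avoids invoking existence of a single-layer representation and the associated bound $\|g\|_{L^2}\lesssim\|u\|$, and it lands directly on $\|u\|_{L^2(\tilde\Omega)}$ as the controlling quantity (the paper's proof actually ends with $\|u\|_{L^2(\partial\tilde\Omega)}$). The paper's route is shorter once one accepts the addition theorem and the density bound as black boxes. Your remark about spending the slack $\rho'>\rho$ to kill the polynomial factor $2\nu+2$ is well taken; the paper's constant likewise implicitly depends on $\tilde\Omega$ through the single-layer step, so the dependence you flag is not a defect of your approach relative to the original.
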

For its proof, see Appendix \ref{app:2}.

\begin{figure}[h]
	\centering  %图片全局居中  
	\includegraphics[width=0.5\textwidth]{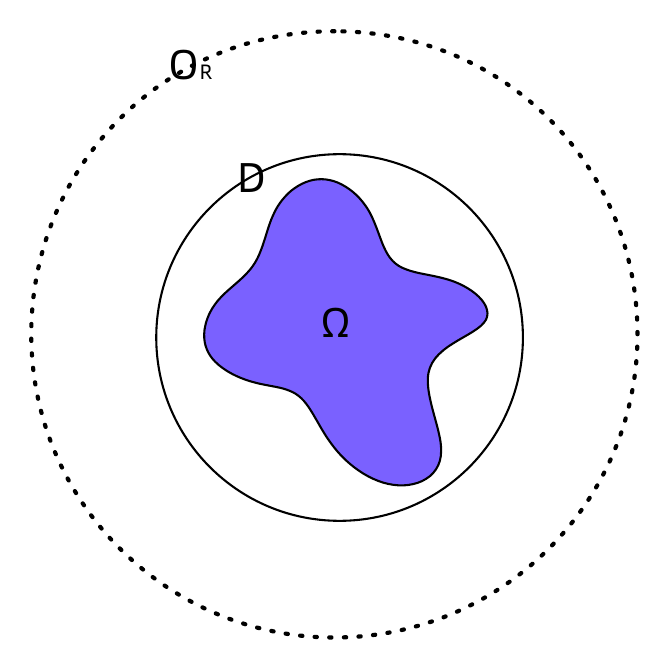}
	\caption{Illustration of domains.}
	\label{fig:domains}
\end{figure}

\subsection{Case 1: The solution can be analytically continued to a sufficiently larger domain.}
In this case, we assume that $u$ satisfies Helmholtz equation \eqref{eq:Helm} in a sufficiently large domain $\Omega_{\rho}$, such that
$$
\Omega \subset\subset D \subset\subset O_{R} \subset\subset \Omega_{\rho}\,,
$$ 
see Fig. \ref{fig:case1} for illustration.
The following theorem indicates a Lipschitz-type convergence estimate about the error.
\begin{figure}[h]
	\centering  %图片全局居中
	\includegraphics[width=0.5\textwidth]{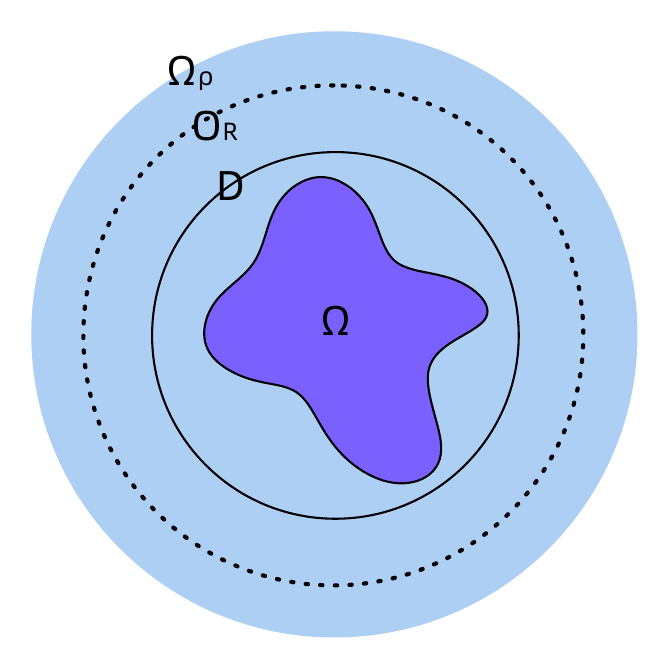}
	\caption{Illustration of Case 1.}
	\label{fig:case1}
\end{figure}

\begin{theorem}\label{thm:error_case1}
	(Error estimate of case 1). Under Assumption \ref{ass},
	assume that there exists a sufficiently large domain $\Omega_{\rho}$, such that
	$$
	\Omega \subset\subset D \subset\subset O_{R} \subset\subset \Omega_{\rho}
	$$
	and $u$ satisfies \eqref{eq:Helm} in $\Omega_{\rho}$, in addition, assume 
	$$
	\|u\|_{H^1(\Omega_{\rho})} \leqslant C_{\rho}
	$$
	with a constant $C_\rho$. When the fundamental solutions $\left\{\Phi\left(\hat{x}_i, x\right)\right\}_{i=1}^M$ are used as learning solutions, denote 
	$$\boldsymbol{b}_x=\left(\Phi\left(\hat{x}_1, x\right), \cdots, \Phi\left(\hat{x}_M, x\right)\right) ,\quad V=\left(\begin{array}{ccc}
		\Phi\left(\hat{x}_1, x_1\right) & \cdots & \Phi\left(\hat{x}_M, x_1\right) \\
		\vdots & \ddots & \vdots \\
		\Phi\left(\hat{x}_1, x_N\right) & \cdots & \Phi\left(\hat{x}_M, x_N\right)
	\end{array}\right)\, ,$$
	$x\in \overline{\Omega}$, and
	\begin{align}\label{eq:Tikhonov2}
		\bm{a}^{(x)}_{x,*}=\underset{\bm{a} \in \mathbb{R}^N}{\arg \min } \left\{\|\bm{a}  V-\bm{b}_x\|^{2}+\alpha \|\bm{a}\|^2\right\}
	\end{align}
	with the regularization parameter
	\begin{equation}\label{eq:choice_alpha}
		\alpha \sim  MNR^{-2M}.
	\end{equation}% in Tikhonov regularization \eqref{eq:Tikhonov}, in which
	Then, the numerical solution $u^{(c)}(x)=\bm{a}_{*,x}\bm{f}$ has the following error estimate
	$$
	\|u(x)-u^{(c)}(x)\|_{L^{2}(\partial\Omega)}
	\leqslant C\|u\|_{L^{2}(\Omega_{\rho})}\left(R^{-M}+\frac{1}{N}\right)\,,
	$$
	where $C$ depends on the wave number $k$, $d$ and the domain $\Omega$.
\end{theorem}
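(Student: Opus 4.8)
The plan is to connect the learned solution $u^{(c)}$ to the exact solution $u$ through the classical MFS approximant built from the very same fundamental solutions, and then to run the error through the variational structure of \eqref{eq:Tikhonov2}. Since $u$ solves \eqref{eq:Helm} in $\Omega_\rho$ with $O_R\subset\subset\Omega_\rho$, in the regime where $\Omega_\rho$ is ``sufficiently large'' (i.e. $\rho>R^2$), Lemma \ref{analytic-domain} supplies the analytic continuation and Fourier decay of $u|_{\partial D}$ required by Proposition \ref{bound-mu}; the latter produces $\bm\mu=(\mu_1,\dots,\mu_M)$ with $u^{(\mu)}:=\sum_{i}\mu_i\Phi(\hat x_i,\cdot)$, $\|u-u^{(\mu)}\|_{L^2(\partial D)}\leqslant CR^{-M}\|u\|_{L^2(\Omega_\rho)}$ and $\|\bm\mu\|^2\leqslant CM^{-1}\|u\|_{L^2(\Omega_\rho)}^2$. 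Because $u-u^{(\mu)}$ again solves \eqref{eq:Helm} on $O_R$ and $\Omega\subset\subset D\subset\subset O_R$, Lemma \ref{lem_interior estimate} together with the eigenvalue-gap bound of Remark \ref{remark:interior error} upgrades the $\partial D$-estimate to $\|u-u^{(\mu)}\|_{L^\infty(\overline\Omega)}\leqslant Cd^{-1}R^{-M}\|u\|_{L^2(\Omega_\rho)}$, hence to the same bound in $L^2(\partial\Omega)$ and, for the nodal residual $\bm e:=(u(x_j)-u^{(\mu)}(x_j))_{j=1}^{N}$, to $\|\bm e\|\leqslant Cd^{-1}\sqrt N\,R^{-M}\|u\|_{L^2(\Omega_\rho)}$.

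Next, writing the new data as $\bm f=(u(x_j))_j=V\bm\mu+\bm e$ and using $\bm b_x\bm\mu=u^{(\mu)}(x)$ and $\bm a_{*,x}=\bm b_xV^*(VV^*+\alpha I)^{-1}$, I obtain the decomposition
\begin{equation*}
u(x)-u^{(c)}(x)=\bigl(u(x)-u^{(\mu)}(x)\bigr)-\bigl(\bm a_{*,x}V-\bm b_x\bigr)\bm\mu-\bm a_{*,x}\bm e .
\end{equation*}
The first term is already controlled. For the other two I would exploit the minimality of $\bm a_{*,x}$ in \eqref{eq:Tikhonov2} against the Lagrange-weight vector $\bm a^{(0)}_x:=(e_1(x),\dots,e_N(x))$: this vector is near-feasible since $(\bm a^{(0)}_xV-\bm b_x)_i$ is exactly the interpolation error of the analytic kernel $\Phi(\hat x_i,\cdot)$ at the point $x$ (uniformly small in $i$ because $\operatorname{dist}(\overline\Omega,\partial O_R)>0$), and $\|\bm a^{(0)}_x\|$ is bounded by a Lebesgue-type constant. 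Minimality then gives $\|\bm a_{*,x}V-\bm b_x\|^2+\alpha\|\bm a_{*,x}\|^2\leqslant\|\bm a^{(0)}_xV-\bm b_x\|^2+\alpha\|\bm a^{(0)}_x\|^2$, so that $\|\bm a_{*,x}V-\bm b_x\|$ and $\sqrt{\alpha}\,\|\bm a_{*,x}\|$ are each of size $(\text{interpolation error})+\sqrt{\alpha}$ up to constants. Combining with $\|\bm\mu\|\leqslant CM^{-1/2}\|u\|_{L^2(\Omega_\rho)}$ and the bound on $\|\bm e\|$ above, the second term of the decomposition is $\leqslant C\|u\|_{L^2(\Omega_\rho)}M^{-1/2}\bigl((\text{interp.\ err.})+\sqrt\alpha\bigr)$ and the third is $\leqslant Cd^{-1}\|u\|_{L^2(\Omega_\rho)}\bigl(\alpha^{-1/2}(\text{interp.\ err.})+1\bigr)\sqrt N\,R^{-M}$. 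The choice $\alpha\sim MNR^{-2M}$ of \eqref{eq:choice_alpha} is exactly what balances the $\alpha$-terms against the MFS accuracy $R^{-M}$, while the interpolation error of the analytic boundary data/kernels at the $N$ nodes supplies the remaining $N^{-1}$; summing the three pieces yields $\|u-u^{(c)}\|_{L^2(\partial\Omega)}\leqslant C\|u\|_{L^2(\Omega_\rho)}(R^{-M}+N^{-1})$ with $C$ depending only on $k$, $d$ and $\Omega$.

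The hard part is this middle step. The matrix $V$ inherits the severe ill-conditioning of the MFS, so $\bm a_{*,x}\bm e$ is amplified by $\sim\alpha^{-1/2}$, and it remains at the level $R^{-M}\|u\|_{L^2(\Omega_\rho)}$ only because $\alpha$ is tuned to $MNR^{-2M}$ and because $\bm e$ is the node restriction of the \emph{smooth} Helmholtz residual $u-u^{(\mu)}$ rather than arbitrary noise; pinning down this interplay rigorously — in particular the uniform-in-$i$ interpolation bounds for $\Phi(\hat x_i,\cdot)$ and the Lebesgue constant that keep $\bm a^{(0)}_x$ simultaneously near-feasible and of bounded norm — is the real work. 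The remaining ingredients (the push-through identity $V^*(VV^*+\alpha I)^{-1}V=I-\alpha(V^*V+\alpha I)^{-1}$, interior elliptic regularity, the eigenvalue-gap stability of Remark \ref{remark:interior error} to transfer $\partial D$-bounds to $\partial\Omega$, and the Barnett--Betcke bounds \eqref{eq:MFS_error}--\eqref{eq:MFS_mu}) are then routine bookkeeping.
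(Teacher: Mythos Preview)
Your decomposition and the use of Proposition \ref{bound-mu} to produce the MFS candidate $\bm\mu$ are on the right track, but the route through the primal variable $\bm a_{*,x}$ with the Lagrange-weight competitor $\bm a^{(0)}_x$ does not close. The loss is a factor of $\sqrt{N}$ that you cannot absorb: the competitor gives $\|\bm a^{(0)}_xV-\bm b_x\|$ of order $\sqrt{M}$ times a single-kernel interpolation error (you are summing $M$ such errors in $\ell^2$), so minimality yields only $\|\bm a_{*,x}V-\bm b_x\|\leqslant C(\sqrt{M}\,N^{-1}+\sqrt{\alpha})$ and $\|\bm a_{*,x}\|\leqslant C(\sqrt{M/\alpha}\,N^{-1}+1)$. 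With $\alpha\sim MNR^{-2M}$ one has $\sqrt{\alpha/M}=\sqrt{N}\,R^{-M}$, and both the second and third pieces of your decomposition come out as $C\|u\|_{L^2(\Omega_\rho)}(N^{-1}+\sqrt{N}\,R^{-M})$, not $C\|u\|_{L^2(\Omega_\rho)}(N^{-1}+R^{-M})$. (Take $N\sim R^{M}$: then $\sqrt{N}\,R^{-M}\sim R^{-M/2}$, far larger than $R^{-M}+N^{-1}\sim 2R^{-M}$.) Your own ``hard part'' paragraph is exactly where this loss hides; Cauchy--Schwarz on $\bm a_{*,x}\bm e$ with $\|\bm e\|\leqslant C\sqrt{N}\,R^{-M}\|u\|$ is too crude, and no amount of Lebesgue-constant bookkeeping repairs it.

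The paper avoids all of this by the dual reformulation you cite at the end but never actually exploit. From $\bm b_xV^*(VV^*+\alpha I)^{-1}=\bm b_x(V^*V+\alpha I)^{-1}V^*$ one has $u^{(c)}(x)=\bm b_x\bm c_*$ with $\bm c_*=\arg\min_{\bm c\in\mathbb R^M}\{\|\bm f-V\bm c\|^2+\alpha\|\bm c\|^2\}$, so $u^{(c)}$ is \emph{itself} an MFS expansion and $\bm\mu$ is the natural competitor for \emph{this} minimization rather than the one in $\bm a$. Comparing $\bm c_*$ to $\bm\mu$ gives directly $\|\bm f-V\bm c_*\|^2\leqslant CNR^{-2M}\|u\|^2$ and $\|\bm c_*\|^2\leqslant CM^{-1}\|u\|^2$; then $\|u-u^{(c)}\|_{L^2(\partial\Omega)}^2$ is estimated as the weighted quadrature sum $\sum_j|\Gamma_j|\,|u(x_j)-(V\bm c_*)_j|^2$ plus a derivative remainder. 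Since $|\Gamma_j|\sim N^{-1}$, the quadrature part is $\leqslant CN^{-1}\|\bm f-V\bm c_*\|^2\leqslant CR^{-2M}\|u\|^2$ and the remainder is $\leqslant CN^{-2}\|u\|^2$ thanks to the bound on $\|\bm c_*\|$. No interpolation competitor, no Lebesgue constant, no stray $\sqrt{N}$.
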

\begin{proof}
	According to \eqref{eq:Tikhonov} and \eqref{eq:minimum}, it follows that
	$$
	u^{(c)}(x)=\bm{a}_{*,x}\bm{f}=\bm{b}_x V^{*}(VV^{*}+\alpha I)^{-1}\bm{f}
	=\bm{b}_x(V^{*}V+\alpha I)^{-1}V^{*}\bm{f},
	$$
	Denote $\bm{c}_{*}=(V^{*}V+\alpha I)^{-1}V^{*}\bm{f}$, then 
	\begin{align}\label{eq:c}
		\bm{c}_{*}=\underset{\bm{c} \in \mathbb{R}^M}{\arg \min } \left\{\|\bm{f}-V\bm{c}\|^2+\alpha \|\bm{c}\|^2\right\},
	\end{align}
	and $u^{(c)}$ can be equivalently written as 
	$u^{(c)}(x)=\bm{b}_x\bm{c}_{*}$.
	
	By Lemma \ref{analytic-domain}, the conditions in Lemma \ref{FMS} are satisfied. Referring to Lemma \ref{FMS}, Proposition \ref{bound-mu} and Remark \ref{remark:interior error}, there exists fundamental solution approximation $u^{(\mu)}(x)=\bm{b}_x\bm{\mu}$ such that 	
	$$%\label{eq:estimate_mu-2}
	\|\bm{\mu}\|^2\leqslant \frac{C}{M}\|u\|_{L^{2}(\Omega_{\rho})}^{2},
	$$
	and
	$$
	\|u-u^{(\mu)}\|_{L^{2}(D)}\leqslant
	CR^{-M}\|u\|_{L^2{(\Omega_{\rho})}}\,.
	$$
	Combining Lemma \ref{lem_interior estimate} and the Sobolev embedding theorem yields
	$$
	|u(x_{j})-u^{(\mu)}(x_{j})|\leqslant 
	CR^{-M}\|u\|_{L^2{(\Omega_{\rho})}},\quad j=1,2,\cdots,N\,.
	$$
	Then, taking $\bm{\mu}$ as candidate, referring to the minimality of \eqref{eq:c}, we have
	$$
	\begin{aligned}
		\left\|\bm{f}-V\bm{c}_{*}\right\|^2 \leqslant \left\|\bm{f}-V\bm{\mu}\right\|^2+\alpha\left\|\bm{\mu}\right\|^2 
		&\leqslant \sum\limits_{j=1}^{N}\left|u(x_{j})-\sum\limits_{i=1}^{M}\Phi(\hat{x}_{i},x_{j})\mu_{i}\right|^2+\alpha \sum\limits_{i=1}^{M}|\mu_{i}|^2\\
		&\leqslant 
		CNR^{-2M}\|u\|_{L^{2}(\Omega_{\rho})}^2
	\end{aligned}
	$$
	and 
	$$
	\|\bm{c}_{*}\|^2\leqslant \frac{1}{\alpha}\|\bm{f}-V\bm{\mu}\|^2+\|\bm{\mu}\|^2\leqslant \frac{C}{M}\|u\|_{L^{2}(\Omega_{\rho})}^{2}.
	$$
	Suppose $\partial \Omega=\cup_{i=1}^{N}\Gamma_{i}$. Then,
	\begin{equation}\label{eq:deduction}
		\begin{aligned}
			\|u(x)-u^{(c)}(x)\|_{L^{2}(\partial\Omega)}^{2}
			\leqslant &\sum\limits_{j=1}^{N}|\Gamma_{j}|\left|u(x_{j})-\sum\limits_{i=1}^{M}\Phi(\hat{x}_{i},x_{j})c_{i}\right|^2+Res
			\\
			\leqslant &C\left(R^{-2M}+\frac{1}{N^2}\right)\|u\|_{L^{2}(\Omega_{\rho})}^2\,,
		\end{aligned}
	\end{equation}
	in which
	$$Res=4|\partial \Omega|\left(\max |\Gamma_{j}|\right)^2
	\left(\underset{x\in\partial \Omega}{\max}|u^{\prime}(x)|^2+M\underset{x\in\partial \Omega,\ \hat{x}\in \partial O_{R}}{\max}|\Phi_{x}(\hat{x},x)|^2\|\bm{c}_{*}\|^2\right).
	$$
	The detailed deduction of \eqref{eq:deduction} can be found in Appendix \ref{app:1}. The last inequality is based on the assumption that the collocation points are equally placed on $\partial\Omega$. Moreover, the Lemma \ref{lem_interior estimate} and the Sobolev embedding theorem give
	$$
	\underset{x\in\partial \Omega}{\max}|u^{\prime}(x)|\leqslant C \|u\|_{L^{2}(\Omega_{\rho})}\,.
	$$
	The proof is complete.
\end{proof}
\subsection{Case 2: The solution can be analytically continued to a slightly larger domain.}\label{sec:case2}
In this case, we assume that $u$ satisfies Helmholtz equation \eqref{eq:Helm} in $\Omega_{\rho}$ with 
$$
\Omega \subset\subset \Omega_{\rho} \subset\subset O_{R}\subset\subset \Omega_{\varepsilon}\,,
$$ 
which means the solution is uniquely continuable to a larger domain than $\Omega$ but may not extended to $O_{R}$. However, Runge approximation tells us that we can find $u_{\varepsilon}$ satisfying the equation in a larger domain $\Omega_{\varepsilon}$ (see Fig. \ref{fig:case2} for example) which approximates $u$ on $\Omega$ with the desired error $\varepsilon$. Lemma \ref{lem:runge_quan} gives the estimate of polynomial type growth of $u_{\varepsilon}$ versus $\varepsilon$. 
The following theorem indicates a H\"{o}lder-type convergence estimate of the present method in this case.%about the error.
\begin{figure}[h]
	\centering  %图片全局居中
	\includegraphics[width=0.48\textwidth]{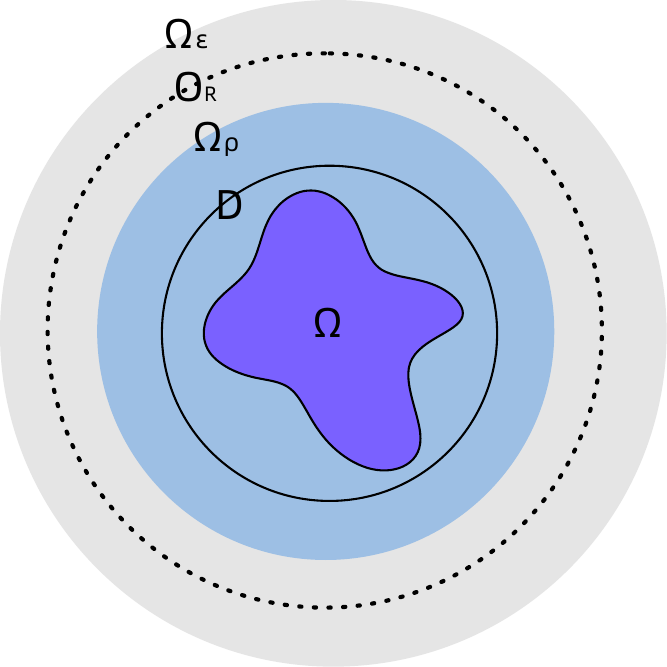}
	\caption{Illustration of Case 2.}
	\label{fig:case2}
\end{figure}
\begin{theorem}\label{thm:error_case2}
	(Error estimate of case 2). Under Assumption \ref{ass},
	we assume that there is $\Omega_{\rho}$ and $\Omega_{\varepsilon}$, such that 
	$$
	\Omega \subset\subset \Omega_{\rho} \subset\subset O_{R}\subset\subset \Omega_{\varepsilon}.
	$$ 
	in addition, assume $u\in H^1(\Omega_\rho)$ satisfying the Helmholtz equation in $\Omega_\rho$ and $u|_{\partial\Omega}=f$. %can be continued to $\Omega_{\rho}$ and 
	%$$
	%\|u\|_{H^1(\Omega_{\rho})} \leqslant C_{\rho}
	%$$
	%with a constant $C_{\rho}$.
	When the fundamental solutions $\left\{\Phi\left(\hat{x}_i, x\right)\right\}_{i=1}^M$ are used as data solution, then the numerical solution
	$u^{(c)}=\bm{a}_{*}\bm{f}$ has the following error estimate
	$$
	\|u(x)-u^{(c)}(x)\|_{L^{2}(\Omega)}
	\leqslant C\|u\|_{H^{1}(\Omega_{\rho})}\left(R^{-M}+\frac{1}{N}\right)^{\kappa},
	$$
	where $\kappa\in(0,1]$ and $C$ depends on $k$, $d$ and the domain $\Omega$, $\Omega_{\varepsilon}$, but independent of $M$.
\end{theorem}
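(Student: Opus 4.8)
The plan is to reduce Case~2 to Case~1 by inserting between $u$ and the numerical solution $u^{(c)}$ an auxiliary function $u_\varepsilon$ that \emph{genuinely} solves the Helmholtz equation on the large domain $\Omega_\varepsilon\supset\supset O_R$ and is $\varepsilon$-close to $u$ near $\overline\Omega$. Concretely, I would apply the quantitative Runge approximation of Lemma~\ref{lem:runge_quan}(ii) with $L=\Delta+k^2$, $\tilde\Omega=\Omega_\rho$, $\Omega_2=\Omega_\varepsilon$, and --- the one design choice that matters --- with $\Omega_1$ taken to be a neighbourhood of $\overline\Omega$ with $\Omega\subset\subset\Omega_1\subset\subset\Omega_\rho$ (possible since $\Omega\subset\subset\Omega_\rho$, assuming as in Assumption~\ref{ass} that $k^2$ is not a Dirichlet eigenvalue on $\Omega_\varepsilon$ either). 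This produces $u_\varepsilon\in S_2$ with $\|u-u_\varepsilon\|_{L^2(\Omega_1)}\le\varepsilon\|u\|_{H^1(\Omega_\rho)}$ and $\|u_\varepsilon\|_{H^{1/2}(\partial\Omega_\varepsilon)}\le C\varepsilon^{-\beta}\|u\|_{H^1(\Omega_\rho)}$; the global estimate of Lemma~\ref{lem:existence} on $\Omega_\varepsilon$ then upgrades the latter to $\|u_\varepsilon\|_{H^1(\Omega_\varepsilon)}\le C\varepsilon^{-\beta}\|u\|_{H^1(\Omega_\rho)}$. Because $u-u_\varepsilon$ solves \eqref{eq:Helm} in $\Omega_1$ and $\overline\Omega\subset\subset\Omega_1$, interior regularity (Lemma~\ref{lem_interior estimate}) and Sobolev embedding give the pointwise bound $\max_{x\in\overline\Omega}|u(x)-u_\varepsilon(x)|\le C\varepsilon\|u\|_{H^1(\Omega_\rho)}$, which is exactly what lets one compare $u$ and $u_\varepsilon$ at the collocation points without a further unique continuation argument.

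Next, $u_\varepsilon$ is now in the geometry of Case~1, $\Omega\subset\subset D\subset\subset O_R\subset\subset\Omega_\varepsilon$, so Lemmas~\ref{analytic-domain} and \ref{FMS}, Proposition~\ref{bound-mu} and Remark~\ref{remark:interior error} apply to $u_\varepsilon$: there is an MFS combination $u_\varepsilon^{(\mu)}(x)=\bm{b}_x\bm{\mu}_\varepsilon$ with $\|\bm{\mu}_\varepsilon\|^2\le \frac{C}{M}\|u_\varepsilon\|_{L^2(\Omega_\varepsilon)}^2$ and, at the collocation points, $|u_\varepsilon(x_j)-u_\varepsilon^{(\mu)}(x_j)|\le CR^{-M}\|u_\varepsilon\|_{L^2(\Omega_\varepsilon)}$ (taking $\Omega_\varepsilon$ large enough that $u_\varepsilon$ continues past $O_{R^2}$, which is why $C$ depends on $\Omega_\varepsilon$). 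I would then use $\bm{\mu}_\varepsilon$ as a competitor in the Tikhonov problem \eqref{eq:c} defining $\bm{c}_*$, with the regularization parameter taken larger than in Case~1, e.g. $\alpha\sim M(R^{-M}+N^{-1})$. Splitting $\|\bm{f}-V\bm{\mu}_\varepsilon\|$ through $u_\varepsilon$ and using the two estimates just stated yields $\|\bm{f}-V\bm{\mu}_\varepsilon\|^2\le CN\bigl(\varepsilon^2+R^{-2M}\varepsilon^{-2\beta}\bigr)\|u\|_{H^1(\Omega_\rho)}^2$, so by minimality $\|\bm{f}-V\bm{c}_*\|^2\le CN\bigl(\varepsilon^2+R^{-2M}\varepsilon^{-2\beta}\bigr)\|u\|_{H^1(\Omega_\rho)}^2+\alpha\|\bm{\mu}_\varepsilon\|^2$ and $\|\bm{c}_*\|^2\le\alpha^{-1}\|\bm{f}-V\bm{\mu}_\varepsilon\|^2+\|\bm{\mu}_\varepsilon\|^2$, which with this $\alpha$ stays of polynomial size in $\varepsilon^{-1}$ rather than blowing up.

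Then I would run the boundary quadrature estimate used in the proof of Theorem~\ref{thm:error_case1} (the deduction behind \eqref{eq:deduction}, Appendix~\ref{app:1}) for the pair $\bigl(u,\ u^{(c)}=\bm{b}_x\bm{c}_*\bigr)$: the collocation term contributes $\sim N^{-1}\|\bm{f}-V\bm{c}_*\|^2$ and the remainder $Res$ contributes $\sim N^{-2}\bigl(\|u\|_{H^1(\Omega_\rho)}^2+M\|\bm{c}_*\|^2\bigr)$, using $\max_{\partial\Omega}|u'|\le C\|u\|_{H^1(\Omega_\rho)}$ from Lemma~\ref{lem_interior estimate}. Collecting everything gives, for every $\varepsilon\in(0,1)$, $\|u-u^{(c)}\|_{L^2(\partial\Omega)}\le C\bigl(\varepsilon+\varepsilon^{-\beta}(R^{-M}+N^{-1})\bigr)\|u\|_{H^1(\Omega_\rho)}$. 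Optimising, $\varepsilon\sim(R^{-M}+N^{-1})^{1/(1+\beta)}$, turns this into $C(R^{-M}+N^{-1})^{\kappa}\|u\|_{H^1(\Omega_\rho)}$ with $\kappa=\frac{1}{1+\beta}\in(0,1]$. Finally, since $u-u^{(c)}$ satisfies \eqref{eq:Helm} in $\Omega$ with boundary trace in $L^2(\partial\Omega)$, the well-posedness of the Dirichlet problem (in the transposition sense, with stability constant of the order $1+k^3/d$ as in Lemma~\ref{lem:existence}) transfers this to $\|u-u^{(c)}\|_{L^2(\Omega)}\le C(R^{-M}+N^{-1})^{\kappa}\|u\|_{H^1(\Omega_\rho)}$, which is the claim.

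The hard part --- and the reason Case~2 is not a formal corollary of Case~1 --- is the quantitative matching of three competing scales: the Runge error $\varepsilon$, the amplification $\varepsilon^{-\beta}$ of $\|u_\varepsilon\|$, and the discretisation scale $R^{-M}+N^{-1}$. One must ensure that the competitor $\bm{\mu}_\varepsilon$ simultaneously has small discrete residual against the \emph{true} data $\bm{f}=(u(x_j))_j$ (not the data of $u_\varepsilon$) and controlled norm; this is precisely why $\Omega_1$ in Runge's lemma must be a genuine neighbourhood of $\overline\Omega$ (so interior elliptic estimates give the $O(\varepsilon)$ pointwise bound on $\partial\Omega$) and why $\alpha$ must exceed the Case~1 value (so that $\|\bm{c}_*\|$ does not inherit the $\varepsilon^{-\beta}$ blow-up together with an extra $R^{M}$). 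Everything else is the bookkeeping of substituting the balanced $\varepsilon$ and checking that each contribution is $O\bigl((R^{-M}+N^{-1})^{2\kappa}\bigr)$.
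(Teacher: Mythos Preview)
Your overall plan --- Runge approximation to manufacture a Helmholtz solution $u_\varepsilon$ on $\Omega_\varepsilon\supset\supset O_R$, feed it into the Case~1 machinery, and balance $\varepsilon\sim\delta^{1/(1+\beta)}$ with $\delta=R^{-M}+N^{-1}$ --- is exactly the paper's strategy, and the resulting exponent $\kappa=\tfrac{1}{1+\beta}$ matches. The execution, however, differs, and yours is the more careful one. The paper applies Theorem~\ref{thm:error_case1} as a black box to $u_\varepsilon$ (producing $u_\varepsilon^{(c)}$) and writes the triangle inequality
\[
\|u-u^{(c)}\|\le\|u-u_\varepsilon\|+\|u_\varepsilon-u_\varepsilon^{(c)}\|+\|u_\varepsilon^{(c)}-u^{(c)}\|,
\]
but then bounds only the first two terms; the third, which measures how the learned operator reacts to the perturbation $\bm f\mapsto\bm f_\varepsilon$, is left implicit. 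You sidestep this entirely by reopening the minimisation \eqref{eq:c} and using the MFS coefficient vector $\bm{\mu}_\varepsilon$ of $u_\varepsilon$ directly as a competitor for $\bm{c}_*$ against the \emph{true} data $\bm{f}$. For that you need the pointwise bound $|u(x_j)-u_\varepsilon(x_j)|\lesssim\varepsilon\|u\|_{H^1(\Omega_\rho)}$, which is precisely why you take $\Omega_1$ to be a genuine neighbourhood of $\overline\Omega$ in Lemma~\ref{lem:runge_quan} (the paper uses $\Omega_1=\Omega$ and gets only an $L^2(\Omega)$ bound). You also flag that $\alpha$ must be enlarged relative to Case~1 so that $M\|\bm c_*\|^2$ stays of polynomial size in $\varepsilon^{-1}$ in the remainder $Res$; the paper does not revisit the choice of $\alpha$. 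In short: same high-level idea and same final rate, but your direct competitor argument closes the loop where the paper's black-box triangle inequality leaves a term unaccounted for.
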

\begin{proof}
	According to Lemma \ref{lem:runge_quan}, there exists $u_{\varepsilon}$ satisfying \eqref{eq:Helm} in $\Omega_{\varepsilon}$ and 
	\begin{equation}\label{eq:case2_proof_runge}
		\|u(x)-u_{\varepsilon}(x)\|_{L^2(\Omega)}
		\leqslant  \varepsilon\|u\|_{H^{1}(\Omega_{\rho})},\quad 
		\|u_{\varepsilon}\|_{H^{1/2}(\partial\Omega_{\varepsilon})}\leqslant C\varepsilon^{-\beta}\|u\|_{L^{2}(\Omega)},
	\end{equation}
	where $\varepsilon\in(0,1)$ and $\beta\geqslant 1$ and $C$ depends on $k$, $\Omega$ and $\Omega_{\varepsilon}$. 
	Furthermore,
	$$
	\|u_{\varepsilon}\|_{H^{1}(\Omega_{\varepsilon})}\leqslant C\varepsilon^{-\beta}\|u\|_{L^{2}(\Omega)},
	$$
	Moreover, the Theorem \ref{thm:error_case1} can be applied to $u_{\varepsilon}$ to obtain the corresponding numerical solution $u_{\varepsilon}^{(c)}$. It follows that
	$$
	\|u_{\varepsilon}(x)-u_{\varepsilon}^{(c)}(x)\|_{L^2(\Omega)}\leqslant C_{1}\left(R^{-M}+\frac{1}{N}\right)\|u_{\varepsilon}\|_{L^{2}(\Omega_{\varepsilon})},
	$$
	where $C_{1}$ depends on $k$, $d$ and $\Omega$.
	Therefore,
	$$
	\begin{aligned}
		&\|u(x)-u^{(c)}(x)\|_{L^2(\Omega)}\\
		\leqslant &
		\|u(x)-u_{\varepsilon}(x)\|_{L^2(\Omega)}+\|u_{\varepsilon}(x)-u_{\varepsilon}^{(c)}(x)\|_{L^2(\Omega)}
		+ \|u_{\varepsilon}^{(c)}(x)-u^{(c)}(x)\|_{L^2(\Omega)} \\
		\leqslant &
		\varepsilon \|u\|_{H^{1}(\Omega_{\rho})}+C_{1}\left(R^{-M}+\frac{1}{N}\right)\varepsilon^{-\beta}\|u\|_{H^{1}(\Omega_{\rho})},
	\end{aligned}
	$$
	Denote $\delta = R^{-M}+\frac{1}{N}$. Let $\varepsilon = \delta\varepsilon^{-\beta}$, we have $\varepsilon = \delta^{\frac{1}{1+\beta}}$.
	Therefore,
	$$
	\begin{aligned}
		\|u(x)-u^{(c)}(x)\|_{L^2(\Omega)}
		\leqslant C\delta^{\frac{1}{1+\beta}}\|u\|_{H^{1}(\Omega_{\rho})}.
	\end{aligned}
	$$
	The proof is complete.
\end{proof}
\subsection{Case 3: The solution cannot be analytically continued to a larger domain}\label{sec:case3}
In this case, we assume that $u$ only satisfies Helmholtz equation \eqref{eq:Helm} in $\Omega$. It states that the solution cannot be uniquely continuable to a larger domain than $\Omega$. Fortunately, Runge approximation tells us that we can find $u_{\varepsilon}$ satisfying the equation in a larger domain $\Omega_{\varepsilon}$ (see Fig. \ref{fig:case3} for example), i.e.,
$$
\Omega\subset\subset D\subset\subset O_{R}\subset\subset \Omega_{\varepsilon},
$$
which approximates $u$ on $\Omega$ with the desired error $\varepsilon$. Lemma \ref{lem:runge_quan} gives the estimate of the exponential type growth for the bound of $u_{\varepsilon}$ on $\Omega_\varepsilon$. The following theorem gives a logarithm-type convergence estimate.
\begin{figure}[h]
	\centering  %图片全局居中
	\includegraphics[width=0.48\textwidth]{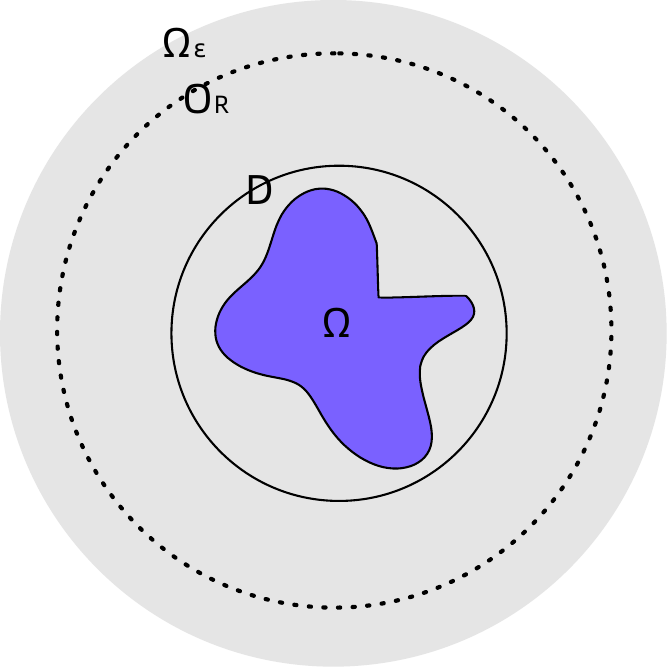}
	\caption{Illustration of Case 3.}
	\label{fig:case3}
\end{figure}
\begin{theorem}\label{thm:error_case3}
	(Error estimate of case 3). Under Assumption \ref{ass},
	we assume that there exists $\Omega_{\varepsilon}$such that 
	$$
	\Omega\subset\subset D\subset\subset O_{R}\subset\subset \Omega_{\varepsilon}.
	$$
	and 
	$$
	\|u\|_{H^1(\Omega)} \leqslant C.
	$$
	When the fundamental solutions $\left\{\Phi\left(\hat{x}_i, x\right)\right\}_{i=1}^M$ are used as data solutions, then the numerical solution
	$u^{(c)}=\bm{a}_{*}\bm{f}$ has the following error estimate
	$$
	\|u(x)-u^{(c)}(x)\|_{L^{2}(\Omega)}
	\leqslant C\|u\|_{H^{1}(\Omega)}\frac{1}{\left|\ln{\left(R^{-M}+\frac{1}{N}\right)}\right|^{\eta}},
	$$
	where $\eta>0$ and $C$ depends on $k$, $d$ and the domain $\Omega$, $\Omega_{\varepsilon}$.
\end{theorem}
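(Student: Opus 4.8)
The plan is to mimic the proof of Theorem~\ref{thm:error_case2} almost verbatim, but to feed in the \emph{exponential} stability bound of Runge's approximation (part~(i) of Lemma~\ref{lem:runge_quan}) in place of the polynomial one, and then re-optimize the free parameter $\varepsilon$. Concretely, first I would apply Lemma~\ref{lem:runge_quan}(i) with $\Omega_1=\Omega$, $\Omega_2=\Omega_\varepsilon$, $\tilde\Omega=D$ and $\Gamma=\partial\Omega_\varepsilon$: for every $\varepsilon\in(0,1)$ there exist $\theta>0$ and $u_\varepsilon$ solving \eqref{eq:Helm} in $\Omega_\varepsilon$ with
$$\|u-u_\varepsilon\|_{L^2(\Omega)}\leqslant \varepsilon\|u\|_{H^1(\Omega)},\qquad \|u_\varepsilon\|_{H^{1/2}(\partial\Omega_\varepsilon)}\leqslant Ce^{C\varepsilon^{-\theta}}\|u_\varepsilon\|_{L^2(\Omega)}.$$
Since $\|u_\varepsilon\|_{L^2(\Omega)}\leqslant \|u-u_\varepsilon\|_{L^2(\Omega)}+\|u\|_{L^2(\Omega)}\leqslant C\|u\|_{H^1(\Omega)}$, and $k^2$ is (by the standing assumptions, needed already to invoke Lemma~\ref{lem:runge_quan}) not a Dirichlet eigenvalue of $\Omega_\varepsilon$, the global estimate of Lemma~\ref{lem:existence} on $\Omega_\varepsilon$ upgrades this to $\|u_\varepsilon\|_{H^1(\Omega_\varepsilon)}\leqslant Ce^{C\varepsilon^{-\theta}}\|u\|_{H^1(\Omega)}$, absorbing the polynomial-in-$\varepsilon^{-1}$ factors into the exponential.

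Because $\Omega\subset\subset D\subset\subset O_R\subset\subset\Omega_\varepsilon$, the function $u_\varepsilon$ fulfils the hypotheses of Theorem~\ref{thm:error_case1} with $\Omega_\rho$ replaced by $\Omega_\varepsilon$; crucially, the constant there depends only on $k$, $d$, $\Omega$, not on $u_\varepsilon$ nor on $M$, so the $\varepsilon$-dependence enters only through $\|u_\varepsilon\|$. Hence the associated learning-based solution $u_\varepsilon^{(c)}$ satisfies
$$\|u_\varepsilon-u_\varepsilon^{(c)}\|_{L^2(\Omega)}\leqslant C_1\Big(R^{-M}+\frac1N\Big)\|u_\varepsilon\|_{L^2(\Omega_\varepsilon)}\leqslant C_1\,\delta\,e^{C\varepsilon^{-\theta}}\|u\|_{H^1(\Omega)},\qquad \delta:=R^{-M}+\frac1N.$$
Splitting $u-u^{(c)}=(u-u_\varepsilon)+(u_\varepsilon-u_\varepsilon^{(c)})+(u_\varepsilon^{(c)}-u^{(c)})$ and estimating the three pieces exactly as in the proof of Theorem~\ref{thm:error_case2} yields $\|u-u^{(c)}\|_{L^2(\Omega)}\leqslant C\big(\varepsilon+\delta\,e^{C\varepsilon^{-\theta}}\big)\|u\|_{H^1(\Omega)}$.

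It then remains to optimize in $\varepsilon$. Assuming $M,N$ large enough that $\delta<1$, I would choose $\varepsilon$ so that $C\varepsilon^{-\theta}=\frac12|\ln\delta|$, i.e. $\varepsilon=\big(2C/|\ln\delta|\big)^{1/\theta}\sim|\ln\delta|^{-1/\theta}$; then $\delta e^{C\varepsilon^{-\theta}}=\delta^{1/2}$, which is dominated by $|\ln\delta|^{-1/\theta}$ as $\delta\to0^+$ since $\delta^{1/2}|\ln\delta|^{1/\theta}\to0$. Therefore
$$\|u-u^{(c)}\|_{L^2(\Omega)}\leqslant C\,|\ln\delta|^{-1/\theta}\,\|u\|_{H^1(\Omega)}=C\,\|u\|_{H^1(\Omega)}\,\frac{1}{\big|\ln\big(R^{-M}+\frac1N\big)\big|^{\eta}},$$
with $\eta=1/\theta>0$, which is exactly the asserted logarithmic rate.

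The main obstacle is the same gap that is passed over lightly in Theorem~\ref{thm:error_case2}: controlling the last piece $\|u_\varepsilon^{(c)}-u^{(c)}\|_{L^2(\Omega)}$, the difference of the two learning-based solutions built from the nodal data of $u_\varepsilon$ and of $u$. Writing $u^{(c)}-u_\varepsilon^{(c)}=\bm{a}_{*}(\bm f-\bm f_\varepsilon)$, one needs a bound on $\|\bm a_*\|$ (from the Tikhonov formula \eqref{eq:minimum} and the choice \eqref{eq:choice_alpha} of $\alpha$) together with a bound on the nodal differences $|u(x_j)-u_\varepsilon(x_j)|$; the latter is delicate in Case~3 because $u$ solves \eqref{eq:Helm} only inside $\Omega$, so the interior estimate (Lemma~\ref{lem_interior estimate}) cannot be invoked for $u$ on $\partial\Omega$ — the comparison must instead be routed through a curve slightly interior to $\partial\Omega$, or $u^{(c)}$ simply defined as $\bm a_*\bm f$ and this third term reorganized into the first two. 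A secondary bookkeeping point is reconciling the $L^2(\partial\Omega)$ conclusion of Theorem~\ref{thm:error_case1} with the $L^2(\Omega)$ norm used here, which one handles by noting that $u_\varepsilon-u_\varepsilon^{(c)}$ solves \eqref{eq:Helm} in $D\supset\supset\Omega$ and using Assumption~\ref{ass}.
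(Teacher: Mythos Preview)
Your proof is correct and follows essentially the same route as the paper's: invoke part~(i) of Lemma~\ref{lem:runge_quan}, apply Theorem~\ref{thm:error_case1} to the Runge extension $u_\varepsilon$, use the same three-piece splitting, and optimize via $C\varepsilon^{-\theta}=\gamma|\ln\delta|$ (the paper writes a generic $\gamma\in(0,1)$ where you take $\gamma=\tfrac12$), arriving at $\eta=1/\theta$. The gaps you flag---control of the third term $\|u_\varepsilon^{(c)}-u^{(c)}\|_{L^2(\Omega)}$ and the $L^2(\partial\Omega)$-versus-$L^2(\Omega)$ reconciliation---are passed over without comment in the paper's own proof as well.
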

\begin{proof}
	According to Lemma \ref{lem:runge_quan}, there exists $u_{\varepsilon}$ satisfying \eqref{eq:Helm} in $\Omega_{\varepsilon}$ such that \begin{equation}\label{eq:case3_proof_runge}
		\|u(x)-u_{\varepsilon}(x)\|_{L^2(\Omega)}
		\leqslant  \varepsilon\|u\|_{H^{1}(\Omega)},\quad 
		\|u_{\varepsilon}\|_{H^{1/2}(\partial\Omega_{\varepsilon})}\leqslant C\mathrm{e}^{C\varepsilon^{-\theta}}\|u\|_{L^{2}(\Omega)},
	\end{equation}
	where $\varepsilon \in(0,1)$, $\theta>0$ and $C$ depends on $k$, $\Omega$ and $\Omega_{\varepsilon}$. 
	Furthermore,
	$$
	\|u_{\varepsilon}\|_{H^{1}(\Omega_{\varepsilon})}\leqslant C\mathrm{e}^{C\varepsilon^{-\theta}}\|u\|_{L^{2}(\Omega)}
	$$
	Besides, we can apply Theorem \ref{thm:error_case1} to $u_{\varepsilon}$ to obtain the corresponding numerical solution $u_{\varepsilon}^{(c)}$. It follows that
	$$
	\|u_{\varepsilon}(x)-u_{\varepsilon}^{(c)}(x)\|_{L^2(\Omega)}\leqslant C_{2}\left(R^{-M}+\frac{1}{N}\right)\|u\|_{L^{2}(\Omega_{\varepsilon})},
	$$
	where $C_{2}$ depends on $k$, $d$ and the domain $\Omega$.
	Therefore,
	$$
	\begin{aligned}
		&\|u(x)-u^{(c)}(x)\|_{L^2(\Omega)}\\
		\leqslant &
		\|u(x)-u_{\varepsilon} (x)\|_{L^2(\Omega)}+\|u_{\varepsilon}(x)-    u_{\varepsilon}^{(c)}(x)\|_{L^2(\Omega)}
		+ \|u_{\varepsilon}^{(c)}(x)-u^{(c)}(x)\|_{L^2(\Omega)} \\
		\leqslant &
		\varepsilon \|u\|_{H^{1}(\Omega)}+C_{2}\left(R^{-M}+\frac{1}{N}\right)e^{C\varepsilon^{-\theta}}\|u\|_{H^{1}(\Omega)}.
	\end{aligned}
	$$
	Denote $\delta = R^{-M}+\frac{1}{N}$
	and take 
	$$
	\varepsilon=\left(\frac{C}{\gamma|\ln \delta|}\right)^{\frac{1}{\theta}},\quad \gamma \in(0,1).
	$$ 
	Thus,
	$$
	C \varepsilon^{-\theta}=\gamma|\ln \delta|.
	$$
	Therefore, we have 
	$$
	\begin{aligned}
		\|u(x)-u^{(c)}(x)\|_{L^2(\Omega)}
		\leqslant C\frac{1}{|\ln \delta|^{\frac{1}{\theta}}}.
	\end{aligned}
	$$			
	The proof is complete.
\end{proof}

\section{Numerical examples}\label{sec:numerical examples}
In this section, we implement amounts of numerical experiments for Helmholtz equation with high wave number in complex regions to verify our algorithm and theoretical convergence analysis.

We consider the boundary value problem for Helmholtz equation with the high wave number $k=\frac{2\pi freq}{340}$ when the frequency $freq=10000Hz$ with $k\approx 184.79$. Let $\Omega$ be the flower-shape region whose boundary $\partial \Omega$ can be parameterized by 
$$
(a \cos t - b \cos nt \cos t, a \sin t - b \cos nt\sin t)
$$ with $t\in [0, 2\pi]$ and $(a,b,n)=(0.5,b=0.1,6)$.

\subsection{Case 1}
We choose the exact solution 
$$
u(x)=\sin(\frac{k}{\sqrt{2}}x)\sin(\frac{k}{\sqrt{2}}y)
$$ 
as shown in \subref{fig:case1_exact} of Fig. \ref{fig:case1_exact_source}. It is easy to find that the exact solution can be continued to a sufficiently large domain and oscillates severely as the wave number $k$ grows.

We set $R=1.07$ and choose 288 points on the boundary as the collocation points, i.e., $N=288$. Besides, $M$ is set to be identically 288. Based on the choice of regularization parameter in \eqref{eq:choice_alpha}, we can choose $\alpha = 1\times 10^{-12}$. In Fig. \ref{fig:case1_exact_source} \subref{fig:case1_source}, a number of blue dots represent the locations of 37500 solving points, the red star points represent 408 collocation points on the boundary and the circles surrounding the flower-shape denote the pole points.

\begin{figure}[h]
	\centering  %图片全局居中
	\subfigure[The exact solution for Case 1.
	]{
		\label{fig:case1_exact}
		\includegraphics[width=0.48\textwidth]{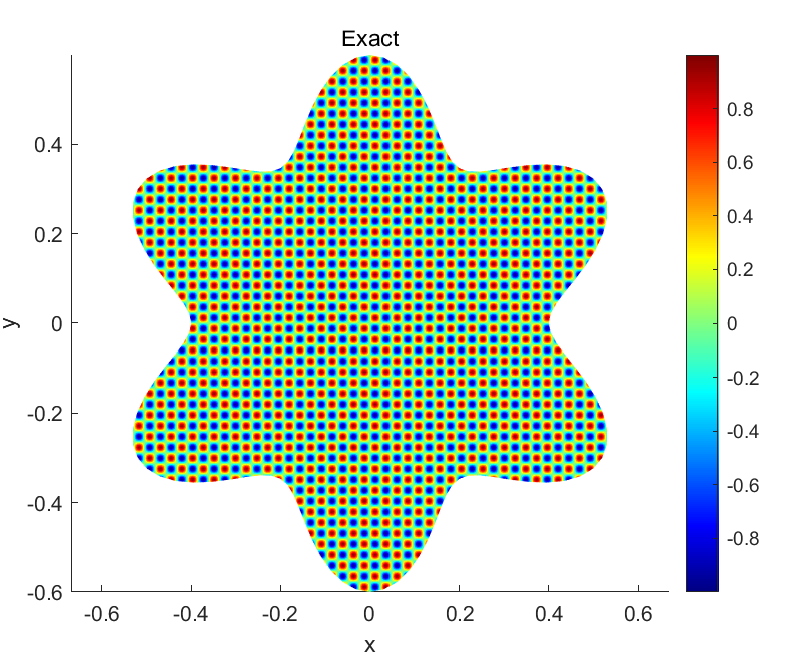}}
	\subfigure[Points settings.]{
		\label{fig:case1_source}
		\includegraphics[width=0.48\textwidth]{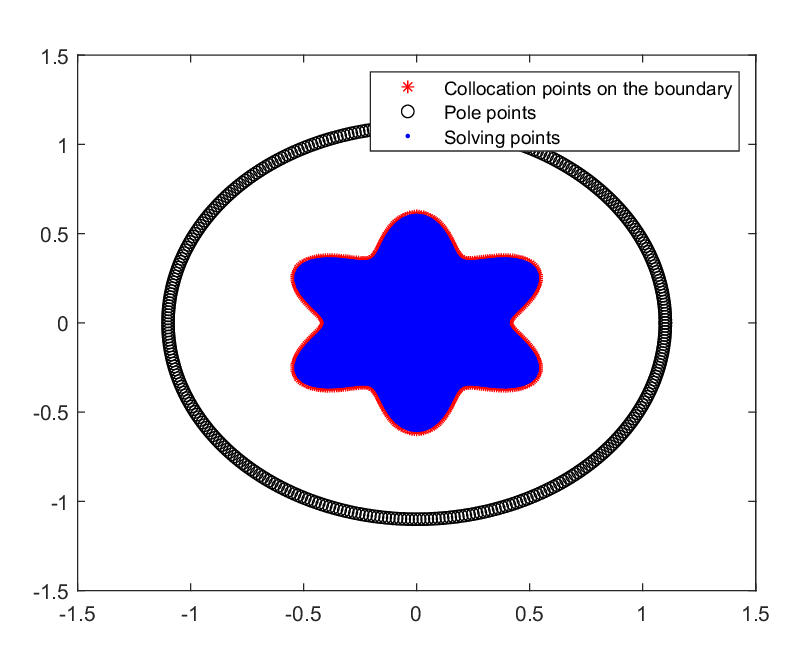}}
	\caption{Exact solution and points settings for Case 1.}
	\label{fig:case1_exact_source}
\end{figure}
Fig \ref{fig:case1_ngm_solution_error} shows the numerical solution and the corresponding error for our algorithm.
\begin{figure}[h]
	\centering  %图片全局居中
	\subfigure[Numerical solution for Case 1]{
		\label{fig:case1_ngm_solution}
		\includegraphics[width=0.48\textwidth]{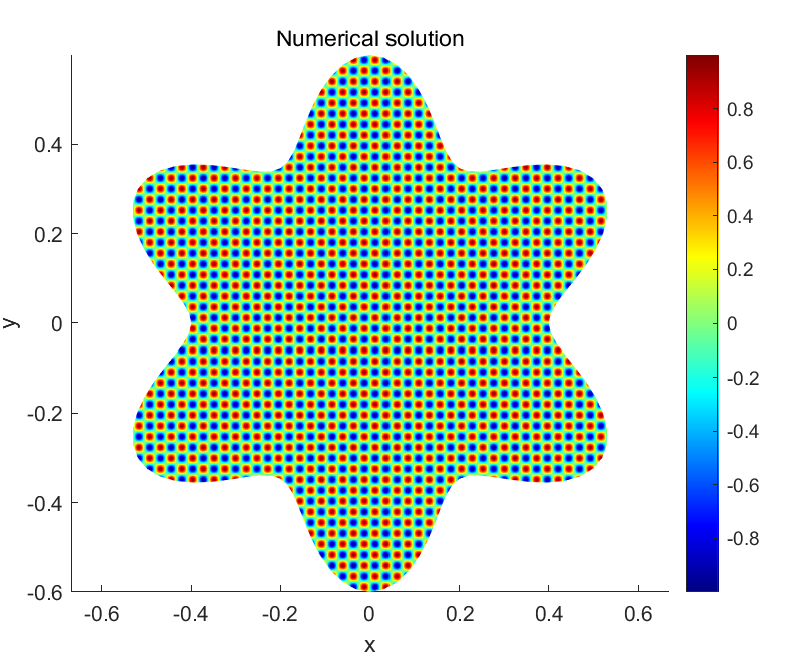}}
	\subfigure[Error]{
		\label{fig:case1_ngm_error}
		\includegraphics[width=0.48\textwidth]{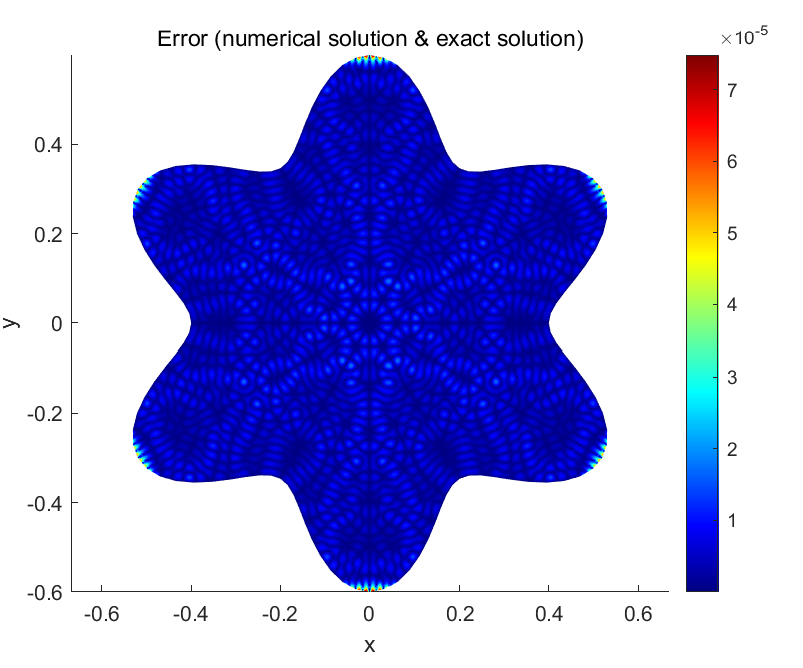}}
	\caption{Numerical solution and error of our algorithm for Case 1}
	\label{fig:case1_ngm_solution_error}
\end{figure}
Table \ref{tab:case1} gives the numerical results for Case 1. It costs us 2.2823 seconds to learn the discrete solution operator. 

\begin{remark}
	We also use Partial Differential Equation Toolbox in Matlab to solve Case 1. The target maximum element edge length and the target minimum element edge length are 0.001 and 0.0005, respectively. It spends 178.0038 seconds and $2$-norm and $\infty$-norm of the error vector are 41.0275 and 0.8555, respectively. 
\end{remark}

\begin{table}[h]
	%\centering
	\caption{Numerical results of Case 1}
	\label{tab:case1}
	\centering
	\tabcolsep=8pt
	\begin{tabular}{cc}
		\hline
		\hline
		$2$-norm for error vector&  1.0582e-03 \\
		$\infty$-norm for error vector & 7.4813e-05 \\
		Time (seconds)&0.0069\\
		\hline
		\hline
	\end{tabular}
\end{table}

\subsection{Case 2}
In this case, using the operator learned in Case 1, we verify the solution that can be continued to a slightly larger domain. For example, we choose the exact solution 
$$
u(x)=\Phi(\hat{x},x),
$$
where $\hat{x} = (0.55,0)$, as shown in Fig. \ref{fig:case2_exact}. 
\begin{figure}[h]
	\centering  %图片全局居中
	\includegraphics[width=0.48\textwidth]{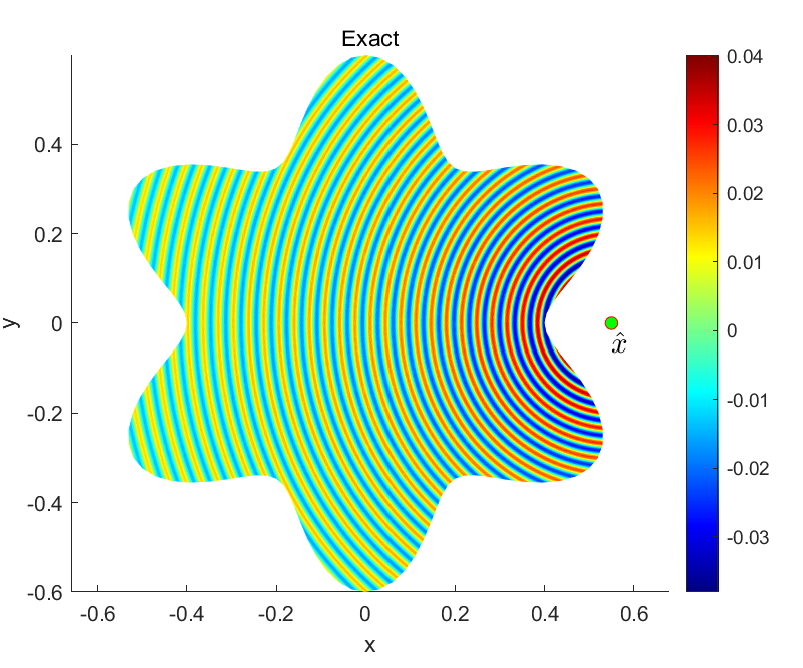}	\caption{Exact solution for Case 2.}   \label{fig:case2_exact}
\end{figure}

The numerical solution and the error of our numerical method are shown in Fig.
\ref{fig:case2_num_results}. 
\begin{figure}[h]
	\centering  %图片全局居中
	\subfigure[Numerical solution for Case 2]{\label{fig:case2_num}
		\includegraphics[width=0.48\textwidth]{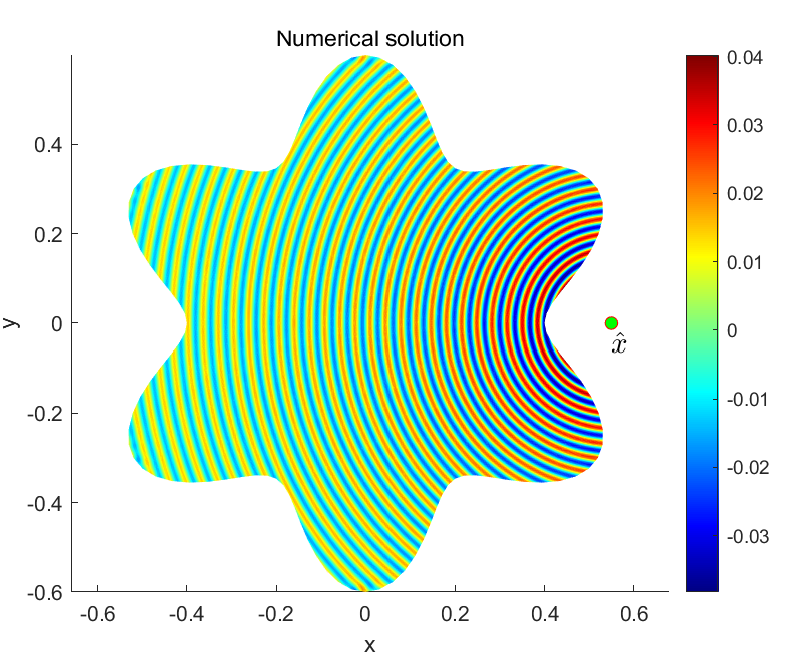}}
	\subfigure[Error for Case 2]{
		\label{fig:case2_error}
		\includegraphics[width=0.48\textwidth]{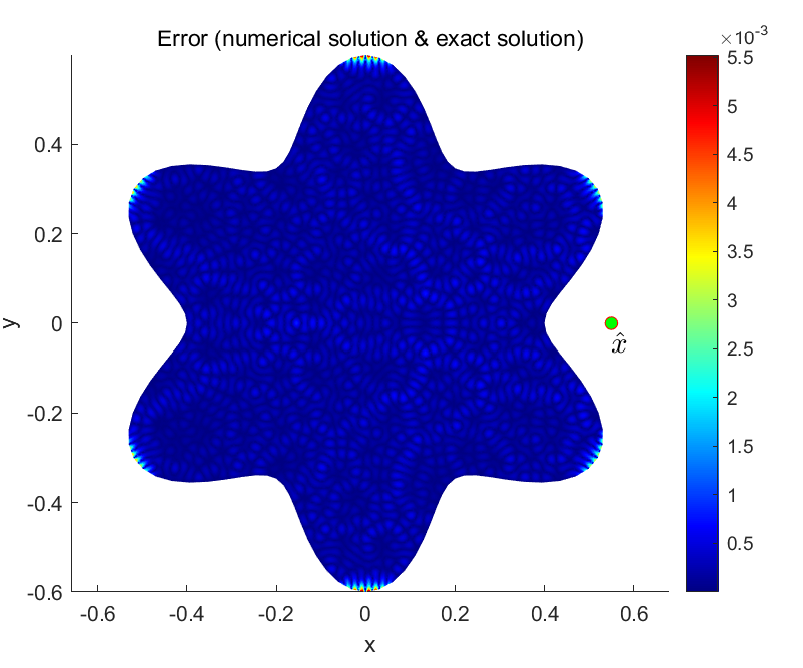}}
	\caption{Numerical solution and error of our algorithm for Case 2}    \label{fig:case2_num_results}
\end{figure}
Table \ref{tab:case2} gives the numerical results for Case 2. Compared with the results of Case 1 in Table \ref{tab:case1}, the error grows which coincides with Theorem \ref{thm:error_case2}.
\begin{table}[H]
	\centering
	\caption{Numerical results for Case 2.}
	\label{tab:case2}
	\centering
	\tabcolsep=8pt
	\begin{tabular}{cc}
		\hline
		\hline
		$2$-norm for error vector&  5.5929e-02 \\
		$\infty$-norm for error vector & 5.5240e-03 \\
		Time (seconds)&0.0064\\
		\hline
		\hline
	\end{tabular}
\end{table}
\begin{remark}
	We also use Partial Differential Equation Toolbox in Matlab to solve Case 2. The target maximum element edge length and the target minimum element edge length are 0.001 and 0.0005, respectively. It spends 182.0038 seconds and $2$-norm and $\infty$-norm of the error vector are 2.7629 and 0.0517, respectively.
	The reason that the error of this case is less than that of Case 1 is that the value of the exact solution is relatively small.
\end{remark}
\subsection{Case 3}
In this case, we verify the solution that the singularity occurs closer to the region $\Omega$ compared with Case 2. For example, we choose the exact solution 
$$
u(x)=\Phi(\hat{x}_{1},x)-\Phi(\hat{x}_{2},x),
$$ 
where $\hat{x}_{1} = (0.45,0.05)$, $\hat{x}_{2} = (0.45,-0.05)$, as shown in Fig. \ref{fig:case3_exact_error} \subref{fig:case3_exact}. Besides, Fig. \ref{fig:case3_exact_error} \subref{fig:case3_error} gives the error of our algorithm. The error becomes larger, which coincides with Theorem \ref{thm:error_case3}. How to improve the accuracy for such cases is our additional work.
\begin{figure}[h]
	\centering  %图片全局居中
	\subfigure[Exact solution for Case 3]{
		\label{fig:case3_exact}
		\includegraphics[width=0.48\textwidth]{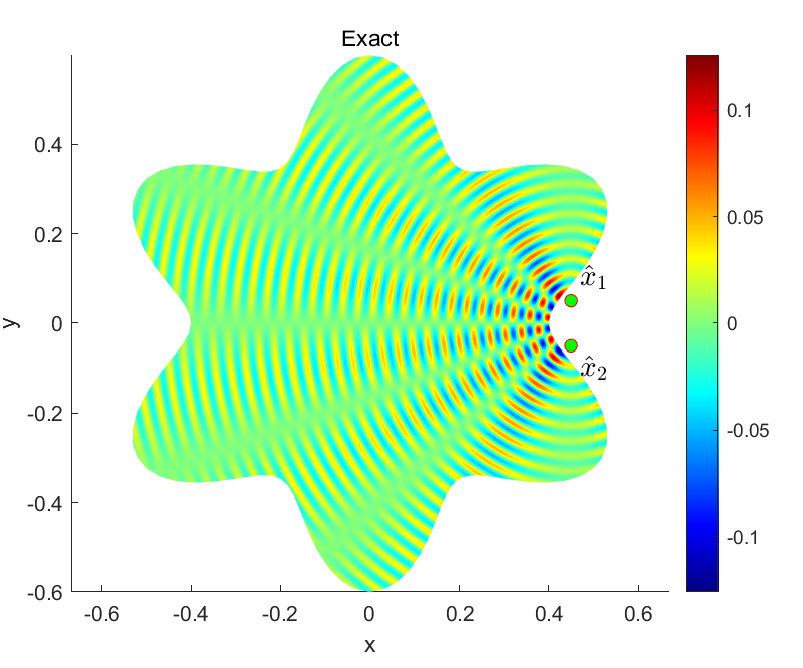}}
	\subfigure[Error for Case 3]{
		\label{fig:case3_error}
		\includegraphics[width=0.48\textwidth]{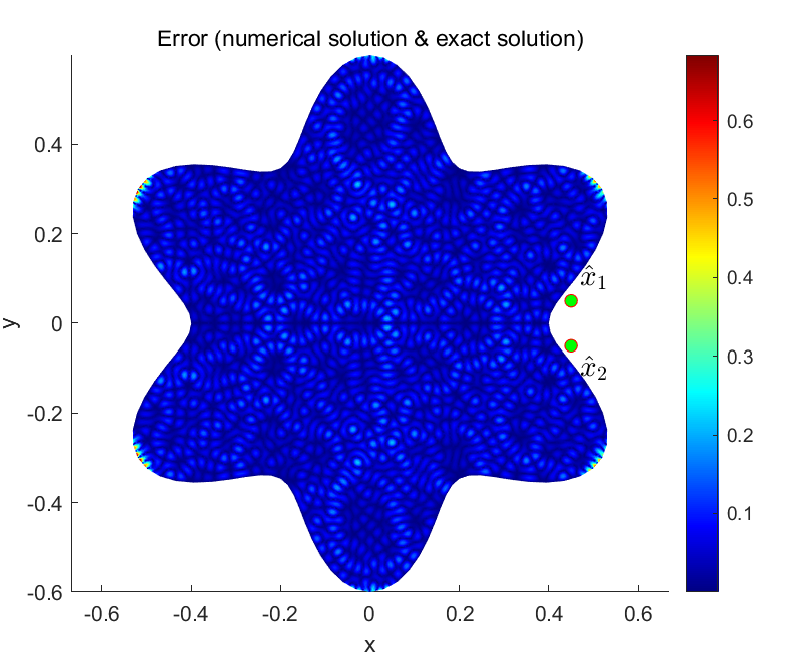}}
	\caption{Exact solution and error of our algorithm for Case 3}
	\label{fig:case3_exact_error}
\end{figure}
\section{Concluding remarks}\label{sec:concluding}
In this work, we propose a learning based numerical method (LbNM) for high-frequency Helmholtz equation.
It is based on various solution data, especially the fundamental solutions,  to reconstruct the solution mapping. Different from the popular machine learning methods, we give the complete error estimates for our learning based numerical methods, which means it has strong interpretability and generalizability.

In terms of theoretical results, the main tools are the MFS and Runge approximation. We have divided into three cases based on the region that the solution can be continued.
We employ the the error estimates for the MFS and Tikhonov regularization to achieve the Lipschitz-type error estimate for the first case. The combination Case 1 with the quantitative version of Runge’s approximation gives the H\"{o}lder-type error estimates for Case 2. Correspondingly, the logarithm-type error estimates could be obtained for Case 3.

Numerical examples have shown the efficient and high-precision performance of our algorithm in high-frequency regimes and verifies the theoretical results for three cases, respectively.

In summary, the present method is mesh-free and independent on the interior discrete error. The computational precision mainly depends on the number of collocation points on the boundary, the number of the source points and the locations that those poles are placed. It means that the reconstruction of the solution operator would be more reliable with increasing data.
The reconstructions of solution operators with respect to different objective location can be carried out individually. It is beneficial to make the computation flexible when the engineers are only interested in local areas in engineering problems.
In practice, we can also adopt other kind of data such as existing numerical
solutions and even experimental measurements. 

In the future, we will focus on the novel numerical methods to improve the precision of Case 3 of section \ref{sec:case3}. Besides, the corresponding theoretical analysis for the learning based numerical methods on mixed boundary problems should be complete.  More generally, the inhomogeneous problems and the problems with variable coefficients should be considered as well. Furthermore, it is possible to apply the learning based numerical methods widely to other problems such as Cauchy problems \cite{Cheng2023}.

% Numbered list
% Use the style of numbering in square brackets.
% If nothing is used, default style will be taken.
%\begin{enumerate}[a)]
%\item 
%\item 
%\item 
%\end{enumerate}  

% Unnumbered list
%\begin{itemize}
%\item 
%\item 
%\item 
%\end{itemize}  

% Description list
%\begin{description}
%\item[]
%\item[] 
%\item[] 
%\end{description}  

% To print the credit authorship contribution details
%\printcredits
\textbf{Acknowledgement}

%The support of this investigation by the National Natural Science 
%Foundation of China (Grant Nos. 12201386, 12241103) are gratefully acknowledged. 
%Thanks are due to associate professor
%Zhong, Min from Southeastern University for valuable discussions and the anonymous referee for helpful comments. 

This work was supported by the National Natural Science Foundation of China (Nos. 12201386, 12241103) and the National Nature Science Foundation of China under Grant 12126601 and the R\&D project of Pazhou Lab (Huangpu) under Grant 2023K0608.

Thanks are due to Dr. Min Zhong from Southeastern University for valuable discussions.

%% Loading bibliography style file
%\bibliographystyle{model1-num-names}
%\bibliographystyle{cas-model2-names}

% Loading bibliography database
\bibliographystyle{unsrt} 
\bibliography{ref.bib}

%% Biography
%\bio{}
%% Here goes the biography details.
%\endbio

%\bio{pic1}
% Here goes the biography details.
%\endbio
% ---- ---- ---- ----
% 附录
% ---- ---- ---- ----
\clearpage
\pagenumbering{alph}
\begin{appendices}
	\section{Detailed deduction of \texorpdfstring{\eqref{eq:deduction}}{}}
	\label{app:1}
	Denote
	$$
	v_{M}(x)=u(x)-\sum\limits_{i=1}^{M}c_{i}\Phi(\hat{x}_{i},x).
	$$
	We have 
	$$
	\begin{aligned}
		&\int\limits_{\partial\Omega}|v_{M}(x)|^2\mathrm{d}x\\
		\leqslant& 2\int\limits_{\partial\Omega}\big|v_{M}(x)-\sum\limits_{j=1}^{N}v_{M}(x_{j})e_{j}(x)\big|^2\mathrm{d}x+2\int\limits_{\partial\Omega}\big|\sum\limits_{j=1}^{N}v_{M}(x_{j})e_{j}(x)\big|^2\mathrm{d}x\\
		\leqslant&
		2\int\limits_{\partial\Omega}\big|\sum\limits_{j=1}^{N}(v_{M}(x)-v_{M}(x_{j}))e_{j}(x)\big|^2\mathrm{d}x+2\int\limits_{\partial\Omega}\big|\sum\limits_{j=1}^{N}v_{M}(x_{j})e_{j}(x)\big|^2\mathrm{d}x\\
		\leqslant&
		2\int\limits_{\partial\Omega}\sum\limits_{j=1}^{N}|v_{M}(x)-v_{M}(x_{j})|^2e_{j}^2(x)\mathrm{d}x+2\int\limits_{\partial\Omega}\sum\limits_{j=1}^{N}v_{M}^{2}(x_{j})e_{j}^{2}(x)\mathrm{d}x\\
		\leqslant&
		2\sum\limits_{j=1}^{N}\int\limits_{\partial\Omega_{j}}|v_{M}(x)-v_{M}(x_{j})|^2\mathrm{d}x+2\sum\limits_{j=1}^{N}\int\limits_{\partial\Omega_{j}}v_{M}^{2}(x_{j})\mathrm{d}x\\
		\leqslant&
		4|\partial \Omega|\left(\max\limits_{j} |\Gamma_{j}|\right)^2
		\left(\underset{x\in\partial \Omega}{\max}|u^{\prime}(x)|^2+M\underset{x\in\partial \Omega,\ \hat{x}\in \partial O_{R}}{\max}|\Phi_{x}(\hat{x},x)|^2\|\bm{c}\|^2\right)+2\sum\limits_{j=1}^{N}\int\limits_{\Gamma_{j}}v_{M}^{2}(x_{j})\mathrm{d}x.
	\end{aligned}
	$$
	The last inequality results from
	$$
	\begin{aligned}
		&\sum\limits_{j=1}^{N}\int\limits_{\Gamma_{j}}|v_{M}(x)-v_{M}(x_{j})|^2\mathrm{d}x\\
		\leqslant & 2\sum\limits_{j=1}^{N}\int\limits_{\partial\Omega_{j}}|u(x)-u(x_{j})|^2+\big|\sum\limits_{i=1}^{M}c_{i}(\Phi(\hat{x}_{i},x)-\Phi(\hat{x}_{i},x))\big|^2\mathrm{d}x\\
		\leqslant &    2\sum\limits_{j=1}^{N}\int\limits_{\Gamma_{j}}|u(x)-u_(x_{j})|^2+\sum\limits_{i=1}^{M}\big|(\Phi(\hat{x}_{i},x)-\Phi(\hat{x}_{i},x))\big|^2\|\bm{c}\|^2\mathrm{d}x\\
		\leqslant & 2\sum\limits_{j=1}^{N}\int\limits_{\Gamma_{j}}|x-x_{j}|^2|u^{\prime}(\xi_{j})|^2+\sum\limits_{i=1}^{M}|x-x_{j}|^2\big|\Phi_{x}(\hat{x}_{i},\eta_{j})\big|^2\|\bm{c}\|^2\mathrm{d}x\\
		\leqslant &
		2|\partial \Omega|\left(\max\limits_{j} |\Gamma_{j}|\right)^2
		\left(\underset{x\in\partial \Omega}{\max}|u^{\prime}(x)|^{2}+M\underset{x\in\partial \Omega, \ \hat{x}\in \partial O_{R}}{\max}|\Phi_{x}(\hat{x},x)|^{2}\|\bm{c}\|^2\right).
	\end{aligned}
	$$

	\section{Proof of Lemma \ref{analytic-domain}}	\label{app:2}
	
	Since $u(x)$ satisfies Helmholtz equation on $\tilde{\Omega}$, there exists single layer potential $g(\zeta)$ on $\partial O_\rho$ such that,
	\[
	u(\bm{x})=\frac{\mathrm{i}}{4}\int_0^{2\pi}H_0^{(1)}(k|\bm{x}-\rho e^{\mathrm{i}\phi}|)g(\phi)\mathrm{d}\phi.
	\]
	Then 
	\[
	f_1(\theta)=u|_{\partial O_1}=\frac{\mathrm{i}}{4}\int_0^{2\pi}H_0^{(1)}(k| e^{\mathrm{i}\theta}-\rho e^{\mathrm{i}\phi}|)g(\phi)\mathrm{d}\phi .
	\]
	As indicated in \cite{Barnett2008},
	\[
	f_1(\theta)=\int_{0}^{2\pi} \frac{\mathrm{i}}{4}\sum_{m\in\mathbb{Z}} H^{(1)}_m(k\rho)  J_m(kr) e^{\mathrm{i}m(  \theta-\phi   )}g(  \phi )\mathrm{d}\phi ,
	\]
	which is in convolution form and yields
	\[
	\hat{f}_1(m)=\hat{s}(m)\hat{g}(m),
	\]
	with the discrete Fourier coefficients
	\[
	\hat{s}(m)=2\pi \left( \frac{\mathrm{i}}{4}\sum_{n\in\mathbb{Z}} H^{(1)}_n(k\rho)  J_n(kr) e^{\mathrm{i}n\theta}\right)^{\widehat{}}\;(m)=\frac{\pi\mathrm{i}}{2}  H^{(1)}_m(k\rho)  J_m(kr) .
	\]
	Due to asymptotic behavior of $J_m(z), Y_m(z)$ as $|m|\rightarrow \infty$ for fixed $k$ \cite{Barnett2008}, there exists constants $c_s,C_s$ depending only on $\rho,k$ such that
	$$\frac{c_s}{|m|}\rho^{-|m|}\leq |\hat{s}(m)|\leq C_s\rho^{-|m|}, \quad m\in\mathbb{Z}.%\backslash\{0\}
	$$
	Therefore,
	\[
	\hat{f}_1(m)=\hat{s}(m)\hat{g}(m)\leq \frac{C_s}{\rho^{m}}\|\hat{g}\|_{l^2(\mathbb{Z})}=\frac{C_s}{2\pi \rho^{m}}\|g\|_{L^2([0,2\pi])}\leq \frac{C}{\rho^{m}}\|u\|_{L^2(\partial\tilde{\Omega})}.
	\]
	Define $f(z)$ by the following Laurent series
	\[
	f(z)=\sum_{n=-\infty}^\infty c_n z^n,
	\]
	with
	\[
	c_n=\frac{1}{2\pi\mathrm{i}}\int_{|\zeta|=1}\frac{u(\zeta)}{(\zeta-0)^{n+1}}\mathrm{d}\zeta  =\frac{1}{2\pi}\int_0^{2\pi} f_1(\theta)e^{-\mathrm{i}n\theta}\mathrm{d}\theta=\hat{f}_1(n).
	\]
	It satisfies
	\[
	f(e^{\mathrm{i}\theta})=u(\bm{x})|_{\partial O_1},
	\]
	and since
	\[
	\varlimsup_{|n|\rightarrow\infty}\sqrt[|n|]{|c_n|}=\frac{1}{\rho},
	\]
	$f(z)$ is analytic in $\{z\in\mathbb{C}|  1/\rho <|z|<\rho\}$.

\end{appendices}
\end{document}